\def\eu{\mathfrak}
\def\ma{\mathbb}
\def\s{\bigskip}
\def\lam#1{k(\Lambda_{P^#1})}
\def\F{{\ma F}_q}
\def\G#1{\big(R_T/(#1)\big)^{\ast}}
\def\p{{\eu p}_{\infty}}
\def\d{\displaystyle}
\def\Witt#1{\stackrel{_{\bullet}}{#1}}
\newcommand{\Id}{\operatorname{Id}}
\newcommand{\Gal}{\operatorname{Gal}}
\newcommand{\im}{\operatorname{im}}
\newcommand{\Aut}{\operatorname{Aut}}
\newcommand{\lcm}{\operatorname{lcm}}
\newcommand{\integer}{\genfrac[]{0.5pt}0}
\newcommand{\integerchico}{\genfrac[]{0.5pt}1}
\newcommand{\mayor}{\genfrac\lceil\rceil{0.5pt}0}
\newcommand{\mayorchico}{\genfrac\lceil\rceil{0.5pt}1}
\def\proof{\noindent{\sc Proof}:\ }
\newcounter{bean}
\def\l{
\begin{list}
{\rm{(\alph{bean})}}{\usecounter{bean}
\setlength{\labelwidth}{0.8in}
\setlength{\labelsep}{0.3cm}
\setlength{\leftmargin}{1cm}}}
\numberwithin{equation}{section}
\newtheorem{theorem}{Theorem}[section]
\newtheorem{proposition}[theorem]{Proposition}
\newtheorem{lemma}[theorem]{Lemma}
\newtheorem{corollary}[theorem]{Corollary}
\theoremstyle{remark}
\newtheorem{remark}[theorem]{Remark}
\title[Kronecker--Weber Theorem in function fields]
{A combinatorial proof of the Kronecker--Weber
Theorem in positive characteristic}
\author[J.C. Salas]{Julio Cesar Salas--Torres}
\address{Universidad Aut\'onoma de la Ciudad de M\'exico\\
Academia de Matem\'aticas. Plantel San Lorenzo Tezonco\\
Prolongaci\'on San Isidro No. 151 Col. San Lorenzo,
Iztapalapa, C.P. 09790, M\'exico, D.F.}
\email{jcstorres88@hotmail.com, torres1jcesar0@gmail.com}
\author[M. Rzedowski]{Martha Rzedowski--Calder\'on}
\address{Departamento de Control Autom\'atico\\
Centro de Investigaci\'on y de Estudios Avanzados del I.P.N.}
\email{mrzedowski@ctrl.cinvestav.mx}
\author[G. Villa]
{Gabriel Villa--Salvador}
\address{
Departamento de Control Autom\'atico\\
Centro de Investigaci\'on y de Estudios Avanzados del I.P.N.}
\email{gvillasalvador@gmail.com, gvilla@ctrl.cinvestav.mx}
\subjclass[2010]{Primary 11R60; Secondary 11R18, 11R32, 11R58}
\keywords{Maximal abelian extension, congruence function fields,
global fields, Kronecker--Weber Theorem, cyclotomic function fields,
Artin--Schreier extensions, Witt vectors.}
\date{July 11, 2013}
\begin{document}

\begin{abstract}
In this paper we present a combinatorial
proof of the Kronecker--Weber Theorem
for global fields of positive characteristic.
The main tools are the use of Witt vectors and their
arithmetic developed by H. L. Schmid. The key
result is to obtain, using counting arguments,
how many $p$--cyclic extensions exist
of fixed degree and bounded conductor
where only one prime ramifies are there. We then
compare this
number with the number of subextensions of
cyclotomic function fields of the same
type and verify that these two numbers
are the same.

\end{abstract}

\maketitle

\section{Introduction}\label{S1}

The classical Kronecker--Weber Theorem establishes that every
finite abelian extension of ${\ma Q}$, the field of rational numbers, is 
contained in a cyclotomic field. Equivalently, the maximal abelian
extension of ${\ma Q}$ is the union of all cyclotomic
fields. In 1974, D. Hayes \cite{Hay74},
proved the analogous result for rational congruence function fields.
Hayes constructed first cyclotomic function fields as
the analogue to classical cyclotomic fields. Indeed,
the analogy was developed in the
first place by L. Carlitz in the 1930's.
The union of all these cyclotomic function fields is not the
maximal abelian extension of the rational congruence function
field $k={\ma F}_q(T)$ since all these extensions are geometric
and the infinite prime is tamely ramified. Hayes proved that
the maximal abelian extension of $k$ is the composite of
three linearly disjoint fields: the first one is
the union of all cyclotomic fields; the second one is
the union of all constant extensions
and the third one is the union of all the subfields of the
corresponding cyclotomic function fields, where the
infinite prime is totally wildly ramified. The proof of this
theorem uses Artin--Takagi's reciprocity law
in class field theory.

In the classical case, possibly the simplest proof of the 
Kronecker--Weber Theorem uses ramification groups.
The key tool in the proof is that there is only one cyclic
extension of ${\ma Q}$ of degree $p$, $p$ an odd prime,
where $p$ is the only ramified prime. Indeed, this field
is the unique subfield of degree $p$ of the cyclotomic
field of $p^2$--roots of unity.
In the case of function fields the situation is quite different.
There exist infinitely many cyclic extensions of $k$ of degree $p$
where only one fixed prime divisor is ramified.

In this paper we present a proof of the Kronocker--Weber
Theorem analogue for rational congruence function fields using
counting arguments for the case of wild ramification.
First, similarly to the classical case, we prove that a 
finite abelian tamely ramified extension of $k$ is contained in
the composite of a cyclotomic function field and a constant
extension (see \cite{SaRzVi2012}). Next, the key part, is to show that every 
cyclic extension of $p$--power degree, where there
is only one ramified prime and it is fully ramified, and
the infinite prime is fully decomposed, is contained
in a cyclotomic function field. The particular case of an
Artin--Schreier extension was completely solved in \cite{SaRzVi2012-2}
using counting techniques.
In this paper we present another proof for the Artin--Schreier case
that also uses counting techniques but is 
suitable of generalization for cyclic extensions of degree $p^n$.
Once the latter is proven, the
rest of the proof follows easily. We use the arithmetic
of Witt vectors developed by Schmid in \cite{Sch36} to give
two proofs, one by induction and the other is direct.

\section{The result}\label{S2}

We give first some notation and some results in the theory of 
cyclotomic function fields developed by D. Hayes
\cite{Hay74}. See also \cite{Vil2006}. Let $k={\ma F}_q(T)$ be
a congruence rational function field, ${\ma F}_q$ denoting the finite
field of $q=p^s$ elements, where $p$ is the characteristic.
Let $R_T={\ma F}_q[T]$ be the ring of 
polynomials, that is, $R_T$ is
 the ring of integers of $k$. 
 For $N\in R_T\setminus
\{0\}$, $\Lambda_N$ denotes the $N$--torsion of the Carlitz
module and $k(\Lambda_N)$ denotes the $N$--th cyclotomic
function field. The $R_T$--module $\Lambda_N$ is cyclic.
For any $m\in{\ma N}$,
$C_m$ denotes a cyclic group of order $m$. Let
$K_T:=\bigcup_{M\in R_T} k(\Lambda_M)$ and ${\ma F}_{\infty}:=
\bigcup_{m\in{\ma N}}{\ma F}_{q^m}$.

We denote by ${\eu p}_{\infty}$
the pole divisor of $T$ in $k$. 
In $k(\Lambda_N)/k$, $\p$ has ramification index $q-1$
and decomposes into $\frac{|G_N|}{q-1}$ different
prime divisors of $k(\Lambda_N)$ of degree $1$, where
$G_N:=\Gal(k(\Lambda_N/k))$. Furthermore,
with the identification $G_N\cong \G N$, the inertia
group ${\eu I}$ of ${\eu p}_{\infty}$ is ${\ma F}_q^{\ast}\subseteq \G N$,
that is, ${\eu I}=\{\sigma_a\mid a\in {\ma F}_q^{\ast}\}$,
where for $A\in R_T$ we use the notation $\sigma_A(\lambda)=
\lambda^A$ for $\lambda \in \Lambda_N$.
We denote by $R_T^+$ the set of monic irreducible polynomials
in $R_T$. The
primes that ramify in $k(\Lambda_N)/k$ are ${\eu p}_{\infty}$
and the polynomials $P\in R_T^+$ such that $P\mid N$,
except in the extreme case $q=2$, $N\in\{T,T+1,T(T+1)\}$
because in this case we have $k(\Lambda_N)=k$.

We set $L_n$ to be the largest
subfield of $k\big(\Lambda_{1/T^{n+1}}\big)$ where $\p$
is fully and purely wildly ramified, $n\in{\ma N}$. 
That is, $L_n=k\big(\Lambda_{1/T^{n+1}}\big)^{{\ma F}_q^{\ast}}$.
Let $L_{\infty}:=\bigcup_{n\in{\ma N}}L_n$.

For any prime divisor ${\eu p}$ in a field $K$, $v_{\eu p}$ will denote
the valuation corresponding to ${\eu p}$.

The main goal
of this paper is to prove the following result.

\begin{theorem}[Kronecker--Weber, {\cite{Hay74}},
{\cite[Theorem12.8.31]{Vil2006}}]\label{T2.1}
The maximal abelian extension $A$ of $k$ is $A=K_T
{\ma F}_{\infty} L_{\infty}$. \qed
\end{theorem}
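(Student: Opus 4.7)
The plan is to prove the two inclusions separately. The inclusion $K_T {\ma F}_{\infty} L_{\infty} \subseteq A$ is immediate, since each of the three factors is an abelian extension of $k$ and the composite of abelian extensions is abelian. The substantive direction is to show that every finite abelian extension $E/k$ is contained in $K_T {\ma F}_{\infty} L_{\infty}$.

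First I would reduce to cyclic extensions of prime-power order $\ell^n$. When $\ell\neq p$, the extension is tame, and the tame Kronecker--Weber theorem proved in \cite{SaRzVi2012} places $E$ inside $K_T {\ma F}_{\infty}$. Thus the whole problem concentrates on a cyclic $p^n$-extension $F/k$. The next reduction is to separate the contributions of the different ramified primes: using an elementary decomposition of $F$ by local conductors (realized via Witt vectors), I would split $F$ into the composite of a constant extension absorbed in ${\ma F}_{\infty}$, a totally and wildly ramified extension at $\p$ absorbed in $L_{\infty}$, and, for each finite prime $P$ that ramifies, an extension in which only $P$ is wildly and fully ramified and in which $\p$ decomposes completely. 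This reduces the theorem to the single assertion:
\textit{every cyclic extension $F/k$ of degree $p^n$ in which a single finite prime $P$ is fully (and wildly) ramified and $\p$ splits completely lies inside $k(\Lambda_{P^m})$ for some $m$.}

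To establish this key assertion I would argue by a double counting. On the arithmetic side, cyclic extensions of degree $p^n$ are parametrized by Witt vectors of length $n$ modulo the Artin--Schreier--Witt operator, and the arithmetic of H.~L.~Schmid gives an explicit conductor formula in terms of the coordinates of a reduced Witt vector. For a fixed bound $P^{m}$ on the conductor at $P$, I would count the number of equivalence classes of Witt vectors whose associated extension has only $P$ ramified (and fully), $\p$ completely decomposed, and conductor dividing $P^{m}\p^{0}$. On the cyclotomic side, I would count the cyclic subfields of $k(\Lambda_{P^m})$ of degree $p^n$ of the same ramification type by analyzing the $p$-part of $\bigl(R_T/(P^m)\bigr)^{\ast}$ together with the decomposition behavior of $\p$ dictated by the quotient by ${\ma F}_q^{\ast}$. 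The goal is to show that the two counts coincide for every $m$; once this is done, the inclusion of the cyclotomic family into the general family forces equality of the families, and the assertion follows.

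The main obstacle I anticipate is the bookkeeping in the Witt-vector count: one must faithfully translate the geometric conditions (only $P$ ramified, $\p$ fully decomposed, $F$ of exact degree $p^n$) into conditions on the Schmid-reduced coordinates $(a_0,\dots,a_{n-1})$, and then enumerate equivalence classes under the Artin--Schreier--Witt operator modulo an appropriate space of trivial vectors. The introduction announces two approaches for this step, one inductive on $n$ (using the Artin--Schreier case from \cite{SaRzVi2012-2} as base and exploiting the $p^n\to p^{n-1}$ Witt reduction) and one direct (exploiting Schmid's global conductor formula at once). I would carry out the induction first because it isolates precisely the new phenomenon at each level, and then package the direct count as a verification.
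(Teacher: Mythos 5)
Your proposal follows essentially the same route as the paper: reduction to cyclic prime-power extensions, the tame case handled by cyclotomic-plus-constant fields, Schmid's Witt-vector normal forms to separate the ramified primes (with the $\p$-part absorbed into $L_\infty$ via $T'=1/T$), and then the key counting comparison between the number of cyclic $p^n$-extensions with only $P$ ramified, $\p$ split, and conductor dividing $P^{\alpha}$ and the number of such subfields of $k(\Lambda_{P^{\alpha}})$, using the automatic inclusion of the cyclotomic family to conclude equality. Both of your announced executions of the count --- induction on $n$ from the Artin--Schreier case and the direct argument from Schmid's conductor formula --- are exactly the two proofs given in Sections 5 and 6 of the paper.
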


\s

To prove Theorem \ref{T2.1} it suffices to show
that any finite abelian extension of $k$ is contained in
$k(\Lambda_N) {\ma F}_{q^m} L_n$ for some $N\in R_T$
and $m,n\in{\ma N}$.

Let $L/k$ be a finite abelian extension. Let 
\[
G:=\Gal(L/k)\cong
C_{n_1}\times \cdots\times C_{n_l}\times C_{p^{a_1}}\times
\cdots \times C_{p^{a_h}}
\]
where $\gcd (n_i,p)=1$ for $1\leq
i\leq l$ and $a_j\in{\ma N}$ for $1\leq j\leq h$. Let $S_i\subseteq
L$ be such that $\Gal(S_i/k)\cong C_{n_i}$, $1\leq i\leq l$ and let 
$R_j\subseteq L$ be such that $\Gal(R_j/k)\cong C_{p^{a_j}}$,
$1\leq j\leq h$. To prove Theorem \ref{T2.1} it is enough
to show that each $S_i$ and each $R_j$ are contained
in $k(\Lambda_N) {\ma F}_{q^m} L_n$ for some $N\in R_T$
and $m,n\in{\ma N}$.

In short, we may assume that $L/k$ is a cyclic extension
of degree $h$ where either $\gcd(h,p)=1$ or $h=p^n$ for
some $n\in{\ma N}$.

\section{Geometric tamely ramified extensions}\label{S3}

In this section, we prove Theorem \ref{T2.1} for the particular case
of a tamely ramified extension.
Let $L/k$ be an abelian extension. Let $P\in R_T$, $d:=\deg P$.

\begin{proposition}\label{P3.1}
Let $P$ be tamely ramified in $L/k$. If $e$ denotes
the ramification index of $P$ in $L$, we have
$e\mid q^d-1$.
\end{proposition}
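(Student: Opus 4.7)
The plan is to exploit the fact that for a tamely ramified prime in an abelian extension, the inertia group embeds not merely into the multiplicative group of the residue field of a prime above $P$, but actually into the multiplicative group of the residue field at $P$ itself, which has order $q^d-1$.

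First I would fix a prime $\mathfrak{P}$ of $L$ lying over $P$ and observe that the residue field $R_T/(P)$ equals $\mathbb{F}_{q^d}$ since $P$ is a monic irreducible polynomial of degree $d$. Because $L/k$ is abelian, the decomposition group $D=D(\mathfrak{P}\mid P)$ and the inertia group $I=I(\mathfrak{P}\mid P)$ are independent of the choice of $\mathfrak{P}$, and the quotient $D/I$ is canonically isomorphic to the Galois group $\Gal(\ell/\mathbb{F}_{q^d})$, where $\ell$ denotes the residue field of $\mathfrak{P}$.

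Next, for tame ramification one has the standard injection
\[
\chi\colon I \hookrightarrow \ell^{\ast}, \qquad \sigma \mapsto \sigma(\pi)/\pi \bmod \mathfrak{P},
\]
where $\pi$ is a uniformizer at $\mathfrak{P}$; this map is independent of the choice of $\pi$, and $|I|=e$ because the ramification is tame. The essential point is that $\chi$ is $D$-equivariant: for $\tau\in D$ and $\sigma\in I$, one has $\chi(\tau\sigma\tau^{-1})=\bar\tau(\chi(\sigma))$, where $\bar\tau\in\Gal(\ell/\mathbb{F}_{q^d})$ is the image of $\tau$ under $D\twoheadrightarrow D/I$.

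Now I would bring in the hypothesis that $L/k$ is abelian. Since $D$ is then abelian, conjugation of $I$ by $D$ is trivial, so by equivariance the image $\chi(I)\subseteq \ell^{\ast}$ is pointwise fixed by $\Gal(\ell/\mathbb{F}_{q^d})$. The fixed subfield of that Galois action inside $\ell$ is precisely $\mathbb{F}_{q^d}$, whose multiplicative group has order $q^d-1$. Hence $\chi$ embeds $I$ into $\mathbb{F}_{q^d}^{\ast}$, and consequently $e=|I|$ divides $q^d-1$. The only delicate step to verify carefully is the $D$-equivariance of $\chi$ and the resulting identification of $\chi(I)$ with a subgroup of $\mathbb{F}_{q^d}^{\ast}$; everything else is formal from the standard theory of ramification.
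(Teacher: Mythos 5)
Your proposal is correct and follows essentially the same route as the paper: embed the (tame) inertia group into the residue field's multiplicative group via $\sigma\mapsto\sigma(\pi)/\pi \bmod \mathfrak{P}$, use the abelian hypothesis to force the image to be fixed by $D/I\cong\Gal(\ell/\mathbb{F}_{q^d})$, and conclude that the image lies in $\mathbb{F}_{q^d}^{\ast}$, so $e\mid q^d-1$. The $D$-equivariance you defer as ``the standard fact'' is exactly the step the paper carries out explicitly, using the independence of the map from the choice of uniformizer applied to $\pi_1=\theta^{-1}\pi$.
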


\proof
First we consider in general an abelian extension $L/k$. Let
$G_{-1}=D$ be the decomposition group of $P$, $G_0=I$
be the inertia group and $G_i$, $i\geq 1$ be the 
ramification groups. Let ${\eu P}$ be a prime divisor
in $L$ dividing $P$. Then if ${\mathcal O}_{{\eu P}}$
denotes the valuation ring of ${\eu P}$, we have
\[
U^{(i)}=1+{\eu P}^i\subseteq {\mathcal O}_{\eu P}^{\ast}=
{\mathcal O}_{\eu P}\setminus {\eu P},
i\geq 1,  U^{(0)}={\mathcal O}_{\eu P}^{\ast}.
\]
Let $l({\eu P}):= {\mathcal O}_{\eu P}/{\eu P}$ be the
residue field at ${\eu P}$. The following are monomorphisms:
\begin{eqnarray*}
G_i/G_{i+1}&\stackrel{\varphi_i}{\hookrightarrow} &U^{(i)}/
U^{(i+1)}\cong 
\begin{cases}
l({\eu P})^{\ast}, i=0\\
{\eu P}^i/{\eu P}^{i+1}\cong l({\eu P}), i\geq 1.
\end{cases}\\
\bar{\sigma}&\mapsto& \sigma\pi/\pi
\end{eqnarray*}
where $\pi$ denotes a prime element for ${\eu P}$.

We will prove that if $G_{-1}/G_1=D/G_1$ is abelian, then
\[
\varphi=\varphi_0\colon G_0/G_1\longrightarrow U^{(0)}/U^{(1)}\cong
\big({\mathcal O}_{\eu P}/{\eu P}\big)^{\ast}
\]
satisfies that $\im \varphi\subseteq {\mathcal O}_P/(P)\cong
R_T/(P)\cong {\ma F}_{q^d}$. In particular it will follow
$\big|G_0/G_1\big|\mid \big|{\ma F}_{q^d}^{\ast}\big|=q^d-1$.

To prove this statement, note that 
\[
\Aut(({\mathcal O}_{\eu P}/
{\eu P}) / ({\mathcal O}_P/(P)))\cong\Gal(({\mathcal O}_{\eu P}/
{\eu P})/({\mathcal O}_P/(P)))=D/I=G_{-1}/G_0
\]
 (see \cite[Corollary 5.2.12]{Vil2006}).

Let $\sigma\in G_0$ and $\varphi(\bar{\sigma})=\varphi(\sigma
\bmod G_1)=[\alpha]\in \big({\mathcal O}_{\eu P}/{\eu P}\big)^{\ast}$.
Therefore $\sigma\pi\equiv \alpha\pi\bmod {\eu P}^2$.

Let $\theta\in G_{-1}=D$ be arbitrary and let $\pi_1:=\theta^{-1}
\pi$. Then $\pi_1$ is a prime element for ${\eu P}$. Since 
$\varphi$ is independent of the prime element, it follows that
$\sigma \pi_1\equiv \alpha \pi_1\bmod {\eu P}^2$,
that is $\sigma\theta^{-1}\pi\equiv \alpha\theta^{-1}\pi\bmod {\eu P}^2$.
Since $G_{-1}/G_1$ is an abelian group, we have
\[
\sigma\pi=(\theta\sigma\theta^{-1})(\pi)\equiv \theta(\alpha)\pi\bmod{\eu P}^2.
\]
Thus $\sigma\pi\equiv \theta(\alpha)\pi\bmod{\eu P}^2$ and
$\sigma\pi\equiv \alpha\pi\bmod {\eu P}^2$. It follows that
$\theta(\alpha)\equiv \alpha\bmod {\eu P}$ for all $\theta\in G_{-1}$.
%Therefore $\bar{\theta}(\alpha)\equiv \alpha\bmod {\eu P}$ for
%all $\theta\in G_{-1}$.

If we write $\tilde{\theta}=\theta\bmod G_0$, $\tilde{\theta}[\alpha]
=[\alpha]$, that is, $[\alpha]$ is a fixed element under the action
of the group $G_{-1}/G_0\cong \Gal(({\mathcal O}_{\eu P}/
{\eu P})/({\mathcal O}_P/(P)))$. We obtain that
$[\alpha]\in {\mathcal O}_P/(P)$.
Therefore $\im \varphi\subseteq \big({\mathcal O}_P/(P)\big)^{\ast}$
and $\big|G_0/G_1\big|\mid \big|\big({\mathcal O}_P/(P)\big)^{\ast}\big|=
q^d-1$.

Finally, since $L/k$ is abelian and $P$ is tamely ramified, $G_1=\{1\}$, it
follows that $e=|G_0|=|G_0/G_1|\mid q^d-1$. \qed

Now consider a finite abelian tamely ramified extension $L/k$
where $P_1,\ldots,P_r$ are the finite ramified primes.
Let $P\in\{P_1,\ldots,P_r\}$, $\deg P=d$. Let $e$ be the ramification
index of $P$ in $L$. Then by Proposition \ref{P3.1} we have
$e\mid q^d-1$. Now $P$ is totally ramified in $k(\Lambda_P)/k$ with
ramification index $q^d-1$. In this extension $\p$ has ramification
index equal to $q-1$.

Let $k\subseteq E\subseteq k(\Lambda_P)$ with $[E:k]=e$.
Set $\tilde{{\eu P}}$ a prime divisor in $LE$ dividing $P$.
Let ${\eu q}:= \tilde{\eu P}|_E$ and ${\eu P}:=\tilde{\eu P}|_L$.
\[
\xymatrix{
{\eu P}\ar@/^1pc/@{-}[rrrr]\ar@/_1pc/@{-}[dddd]\ar@{--}[dr]&&&&\tilde{\eu P}
\ar@/^1pc/@{-}[dddd]\ar@{--}[dl]\\
&L\ar@{--}[rr]\ar@{-}[dd]&&LE\ar@{--}[dd]\\
&&M\ar@{-}[dl]\ar@{-}[ur]_H\\
&k\ar@{-}[rr]_e&&E\ar@{-}[rr]\ar@{--}[dr]&&k(\Lambda_P)\\
P\ar@/_1pc/@{-}[rrrr]\ar@{--}[ru]&&&&{\eu q}
}
\]

We have $e=e_{L/k}({\eu P}|P)=e_{E/k}({\eu q}|P)$. By
Abhyankar's Lemma \cite[Theorem 12.4.4]{Vil2006}, we obtain
\[
e_{LE/k}(\tilde{\eu P}|P)=\lcm [e_{L/k}({\eu P}|P), e_{E/k}(
{\eu q}|P)]=\lcm[e,e]=e.
\]
Let $H\subseteq \Gal(LE/k)$ be the inertia group of $\tilde{
\eu P}/P$. Set $M:=(LE)^H$. Then $P$ is unramified in $M/k$.
We want to see that $L\subseteq Mk(\Lambda_P)$. Indeed we have
$[LE:M]=e$ and $E\cap M=k$ since $P$ is totally ramified
in $E/k$ and unramified in $M/k$. It follows that
$[ME:k]=[M:k][E:k]$. Therefore
\[
[LE:k]=[LE:M][M:k]=e\frac{[ME:k]}{[E:k]}=
e\frac{[ME:k]}{e}=[ME:k].
\]
Since $ME\subseteq LE$ it follows that
$LE=ME=EM\subseteq k(\Lambda_P)M$. Thus
$L\subseteq k(\Lambda_P)M$.

In $M/k$ the finite ramified primes are
$\{P_2,\cdots, P_r\}$. In case $r-1\geq 1$,
we may apply the above argument to $M/k$ and we 
obtain $M_2/k$ such that at most $r-2$ finite
primes are ramified and $M\subseteq k(\Lambda_{P_2})M_2$,
so that $L\subseteq k(\Lambda_{P_1})M\subseteq
k(\Lambda_{P_1})k(\Lambda_{P_2})M_2=
k(\Lambda_{P_1P_2})M_2$. 

Performing the above process at most $r$ times we 
have
\begin{equation}\label{E3.2}
L\subseteq k(\Lambda_{P_1P_2\cdots P_r})M_0
\end{equation}
where in $M_0/k$ the only possible ramified prime is $\p$.

We also have

\begin{proposition}\label{P3.3}
Let $L/k$ be an abelian extension where at most
one prime divisor ${\eu p}_0$ of degree $1$ is
ramified and the extension is tamely ramified. Then
$L/k$ is a constant extension.
\end{proposition}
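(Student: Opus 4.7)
\textbf{Proof plan}: My plan is to apply the Riemann--Hurwitz genus formula after first passing to the geometric part of $L/k$. Let ${\ma F}_{q^m}$ be the exact field of constants of $L$, and set $k_1 := {\ma F}_{q^m}k$. Then $L/k_1$ is a geometric separable Galois extension while the constant extension $k_1/k$ is unramified at every prime of $k$. Because of this, ${\eu p}_0$ lifts to a single prime ${\eu p}_0'$ of $k_1$ of degree $1$ over ${\ma F}_{q^m}$, and its ramification index in $L/k_1$ equals the ramification index $e$ of ${\eu p}_0$ in $L/k$; moreover ${\eu p}_0'$ remains the only prime of $k_1$ that can ramify in $L/k_1$, and that ramification stays tame.

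Setting $n := [L:k_1]$, I would then apply the Riemann--Hurwitz formula to $L/k_1$. Since $g_{k_1} = 0$ and the only ramified prime is ${\eu p}_0'$, of degree one with tame ramification index $e$, its contribution to the degree of the different is $(e-1)n/e$, yielding
\[
2g_L - 2 \;=\; -2n + \frac{(e-1)n}{e} \;=\; -\frac{n(e+1)}{e}.
\]
Non-negativity of the genus then forces $n(e+1) \leq 2e$, which for $e\geq 1$ admits only the solution $n=1$. Hence $L = k_1 = {\ma F}_{q^m}k$ is a constant extension of $k$.

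The only step requiring real attention is the bookkeeping for the constant-field descent: one must verify that ${\eu p}_0$ extends to a unique degree-$1$ prime of $k_1$ and that its ramification index and tameness are preserved under the (everywhere unramified) constant extension $k_1/k$. Both are standard facts about constant extensions, after which the genus inequality does all the work; no combinatorial or Witt-vector input is needed for this proposition, which is purely a tame ramification statement.
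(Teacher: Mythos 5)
Your argument is correct, but it goes by a genuinely different route than the paper's. You descend to $k_1={\ma F}_{q^m}k$ (the exact constant field of $L$) and then let Riemann--Hurwitz do the work: since $k_1/k$ is everywhere unramified and the degree-one prime ${\eu p}_0$ has a unique degree-one extension ${\eu p}_0'$ to $k_1$, the geometric extension $L/k_1$ is ramified at most at ${\eu p}_0'$, tamely, so the different has degree $\frac{(e-1)n}{e}$ and $2g_L-2=-\frac{n(e+1)}{e}$, forcing $n=1$; all the bookkeeping steps you flag (behaviour of a degree-one prime under constant extensions, preservation of $e$ and of tameness) are indeed standard, e.g.\ \cite[Theorem 6.2.1]{Vil2006}. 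Note that both hypotheses are used quantitatively and essentially: tameness gives the exact exponent $e-1$, and $\deg{\eu p}_0=1$ is what makes the ramification term smaller than $2n$ (a single tamely and totally ramified prime of degree $2$ already allows genus $0$ non-constant Kummer extensions). The paper argues differently: it first invokes Proposition \ref{P3.1} to get $e\mid q-1$, passes to the inertia field $E=L^H$ (unramified over $k$, hence constant), observes that $L/E$ is then a Kummer extension $y^e=\alpha$ totally ramified at a single degree-one prime, and derives a contradiction from Hasse's normal form: the principal divisor $(\alpha)_E={\eu P}_0^a{\eu a}/{\eu b}$ with $0<a<e$ has degree zero, so some other prime must occur to an order not divisible by $e$ and hence also ramify. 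Your proof is shorter and more general --- it never uses that $L/k$ is abelian (Galois, or even just separability with a slightly adjusted count, suffices) and needs neither Proposition \ref{P3.1} nor Kummer theory --- at the cost of importing the genus machinery; the paper's proof stays within the elementary Kummer/normal-form toolkit it uses elsewhere and makes the divisibility $e\mid q-1$ explicit along the way.
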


\begin{proof}
By Proposition \ref{P3.1} we have $e:=e_{L/k}({\eu p}_0)|q-1$.
Let $H$ be the inertia group of ${\eu p}_0$. Then $|H|=e$ and
${\eu p}_0$ is unramified in $E:=L^H/k$. Therefore $E/k$
is an unramified extension. Thus $E/k$ is a constant
extension.

Let $[E:k]=m$. Then if ${\eu P}_0$ is a prime divisor in $E$
dividing ${\eu p}_0$ then the relative degree $d_{E/k}
({\eu P}_0|{\eu p}_0)$ is equal to $m$, the number of
prime divisors in $E/k$ is $1$ and the degree of ${\eu P}_0$
is $1$ (see \cite[Theorem 6.2.1]{Vil2006}).
Therefore ${\eu P}_0$ is the only prime divisor 
ramified in $L/E$ and it is of degree $1$ and totally ramified.
Furthermore $[L:E]=e\mid q-1=|{\ma F}_q^{\ast}|$.

The $(q-1)$-th roots of unity belong to $\F\subseteq k$.
Hence $k$ contains the $e$--th roots of unity and $L/E$
is a Kummer extension, say $L=E(y)$ with $y^e=\alpha
\in E=k{\ma F}_{q^m}={\ma F}_{q^m}(T)$.
We write $\alpha$ in a normal form as prescribed by Hasse
%(\cite[Example 5.8.9]{Vil2006}):
\cite{Has35}: $(\alpha)_E=\frac{{\eu P}_0^a
{\eu a}}{{\eu b}}$, $0<a<e$. Now since $\deg (\alpha)_E=0$
it follows that $\deg_E {\eu a}$ or $\deg_E {\eu b}$ is not
a multiple of $e$. This contradicts that ${\eu p}_0$ is the
only ramified prime. Therefore $L/k$ is a constant extension.
\end{proof}

As a corollary to (\ref{E3.2}) and Proposition \ref{P3.3}
we obtain

\begin{corollary}\label{C3.5}
If $L/k$ is a finite abelian tamely ramified extension where
the ramified finite prime divisors are $P_1,\ldots,P_r$, then
\[
L\subseteq k(\Lambda_{P_1\cdots P_r}) {\ma F}_{q^m},
\]
for some $m\in{\ma N}$.\hfill\qed
\end{corollary}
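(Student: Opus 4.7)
The corollary is essentially the combination of the containment (\ref{E3.2}) already established in the text with Proposition \ref{P3.3}, so the plan is to glue these two ingredients together carefully.

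First I would invoke (\ref{E3.2}) to obtain a field $M_0$ such that
\[
L \subseteq k(\Lambda_{P_1 P_2 \cdots P_r}) M_0,
\]
where in $M_0/k$ the only prime that can ramify is $\p$. The goal then reduces to showing that $M_0$ itself is contained in a constant extension $\F_{q^m}$ of $k$, since then the compositum $k(\Lambda_{P_1\cdots P_r})M_0$ sits inside $k(\Lambda_{P_1\cdots P_r})\F_{q^m}$ as required.

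To apply Proposition \ref{P3.3} to $M_0/k$, I need to verify that this extension is abelian and tamely ramified at $\p$. Abelianness is automatic from the construction, as $M_0$ is built as a fixed field of an inertia subgroup inside successive compositums of $L$ with cyclotomic function fields $k(\Lambda_{P_i})$, all of which are abelian over $k$. For tameness at $\p$, I would note that $L/k$ is tame at $\p$ by hypothesis, and in each cyclotomic piece $k(\Lambda_{P_i})/k$ the infinite prime has ramification index exactly $q-1$, which is coprime to $p$; hence every compositum encountered during the iteration of (\ref{E3.2}) remains tamely ramified at $\p$, and so does the subfield $M_0$. Since $\p$ has degree one and is the only possibly ramified prime in $M_0/k$, Proposition \ref{P3.3} applies and gives $M_0 \subseteq k\F_{q^m}$ for some $m\in \ma N$.

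The main obstacle I anticipate is exactly this bookkeeping: making sure that as one iterates the argument leading to (\ref{E3.2}), the successive fields $M, M_2, \ldots, M_0$ stay inside an abelian tamely ramified extension of $k$, so that Proposition \ref{P3.3} can be legitimately invoked at the end. Once this is clear, the conclusion
\[
L \subseteq k(\Lambda_{P_1\cdots P_r}) M_0 \subseteq k(\Lambda_{P_1\cdots P_r})\F_{q^m}
\]
is immediate, finishing the proof.
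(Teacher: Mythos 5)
Your proposal is correct and matches the paper's intended argument: the corollary is stated with no further proof precisely because it follows by combining the containment (\ref{E3.2}) with Proposition \ref{P3.3} applied to $M_0/k$, which is exactly what you do. Your extra bookkeeping (checking that $M_0/k$ is abelian and tame at $\p$, using $e(\p)=q-1$ in the cyclotomic pieces and tameness of $L/k$) is the right justification for invoking Proposition \ref{P3.3} and is consistent with the construction in the paper.
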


\section{Reduction steps}\label{S2(1)}

As a consequence of Corollary \ref{C3.5}, Theorem
\ref{T2.1} will follow if we prove it for the particular
case of a cyclic extension $L/k$ of degree $p^n$ for
some $n\in{\ma N}$. Now, this kind of extensions are
given by a Witt vector:
\[
K=k(\vec y)=k(y_1,\ldots,y_n) \quad\text{with}\quad
 \vec y^p\Witt - \vec y =
\vec \beta=(\beta_1,\ldots,\beta_n)\in W_n(k)
\]
 where
for any field $E$ of characteristic $p$,
$W_n(E)$ denotes the ring of Witt vectors
of length $n$ with components in $E$.

The following result was proved in \cite{MaRzVi2013}. It
``separates'' the ramified prime divisors.

\begin{theorem}\label{T2.3} Let $K/k$ be a cyclic extension of degree $p^n$
where $P_1,\ldots,P_r\in R_T^+$ and possibly $\p$, are the ramified prime
divisors. Then $K=k(\vec y)$ where
\[
\vec y^p\Witt -\vec y=\vec \beta={\vec\delta}_1\Witt + \cdots \Witt + {\vec\delta}_r
\Witt + \vec\mu,
\]
with $\delta_{ij}=\frac{Q_{ij}}{P_i^{e_{ij}}}$, $e_{ij}\geq 0$, $Q_{ij}\in R_T$
and
\l
\item  if $e_{ij}=0$ then $Q_{ij}=0$;
\item if $e_{ij}>0$ then $p\nmid e_{ij}$, $\gcd(Q_{ij},P_i)=1$ and 
$\deg (Q_{ij})<\deg (P_i^{e_{ij}})$, 
\end{list}
and ${\mu}_j=f_j(T)\in R_T$ with
\l
\setcounter{bean}{2}
\item $p\nmid \deg f_j$ when $f_j\not\in {\ma F}_q$ and
\item $\mu_j\notin\wp({\ma F}_q):=\{a^p-a\mid a\in{\ma F}_q\}$ when 
$\mu_j\in {\ma F}_q^{\ast}$.\qed
\end{list}
\end{theorem}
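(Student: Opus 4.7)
Recall that a cyclic extension $K/k$ of degree $p^n$ is determined by the class of $\vec\beta$ in $W_n(k)/\wp(W_n(k))$, where $\wp(\vec z) := \vec z^p \Witt - \vec z$; thus I am free to replace $\vec\beta$ by $\vec\beta \Witt + \wp(\vec z)$ for any $\vec z\in W_n(k)$. The plan is to perform two reductions: first produce a Witt-sum splitting of $\vec\beta$ localized at the primes $P_1,\ldots,P_r$ and $\p$, then normalize each piece to meet conditions (a)--(d).

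\textbf{Stage 1 (Localization).} I would show, by induction on the length $n$, that $\vec\beta$ admits a decomposition
\[
\vec\beta = \vec\delta_1^{(0)} \Witt + \cdots \Witt + \vec\delta_r^{(0)} \Witt + \vec\mu^{(0)},
\]
where the $j$-th component of $\vec\delta_i^{(0)}$ has denominator a power of $P_i$, and $\vec\mu^{(0)}\in W_n(R_T)$. For $n=1$ this is the classical partial-fraction decomposition of $\beta_1\in k$. For the inductive step I apply partial fractions to $\beta_1$, form the length-$n$ vectors supported in the first coordinate, Witt-subtract them from $\vec\beta$, and note that the result has zero first component; such a vector lies in the image of Verschiebung and hence corresponds to a length-$(n-1)$ Witt vector to which the inductive hypothesis applies. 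Since Verschiebung is additive, the lift of the $W_{n-1}$-decomposition back to $W_n$ preserves the local support of each piece at its prime.

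\textbf{Stage 2 (Normalization).} For each $\vec\delta_i^{(0)}$ I would work coordinate by coordinate. The key identity is
\[
\wp\!\left(\frac{A}{P_i^{e'}}\right) = \frac{A^p}{P_i^{pe'}} - \frac{A}{P_i^{e'}}.
\]
If the current pole order $e_{ij}$ is divisible by $p$, say $e_{ij}=pe'$, lift a $p$-th root of the leading numerator coefficient modulo $P_i$ to $A\in R_T$ (possible since the residue field $R_T/P_i$ is perfect) and subtract from $\vec y$ the length-$n$ Witt vector with $A/P_i^{e'}$ in position $j$; the pole order in position $j$ strictly decreases. Euclidean division of the numerator by $P_i^{e_{ij}}$ enforces the bound $\deg Q_{ij}<\deg P_i^{e_{ij}}$. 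Finitely many iterations yield (a) and (b). The same device, with $bT^m$ ($b^p=a$) replacing $A/P_i^{e'}$, reduces each $\mu_j\notin\F$ to degree coprime to $p$, giving (c). For (d), since $\wp(\F)$ has index $p$ in $\F$, any constant component lying in $\wp(\F)$ can be cancelled by subtracting the corresponding Witt vector with constant entries.

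\textbf{Main obstacle.} The principal technical difficulty is that Witt addition and $\wp$ are \emph{not} componentwise: every substitution $\vec y\leftarrow\vec y\Witt+\vec z$ intended to clean up coordinate $j$ propagates corrections to coordinates $j+1,\ldots,n$ through the universal Witt polynomials. One must verify that these corrections can be absorbed into the $\vec\delta_i$'s and $\vec\mu$ without spoiling the normalization already achieved at lower levels, and that the coordinate-by-coordinate induction terminates. This is precisely the content of Schmid's arithmetic of Witt vectors in \cite{Sch36}: the induced correction at level $j+1$ is a polynomial expression in the earlier-level data, so it respects the partition into local pieces at each $P_i$ (respectively, remains polynomial, feeding into $\vec\mu$), which keeps the whole process well-defined.
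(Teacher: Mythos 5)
The paper itself does not prove Theorem \ref{T2.3}: it is quoted from \cite{MaRzVi2013} (building on Schmid \cite{Sch36}), and the argument there is essentially the one you outline --- partial-fraction localization of the Witt vector at each $P_i$ plus $\wp$-substitutions lowering pole orders (resp.\ degrees) coordinate by coordinate, with the non-componentwise corrections pushed into higher coordinates and absorbed by successive approximation, exactly the obstacle you correctly identify and resolve. The only point worth making explicit in your Stage 2 is that a residual polynomial part in a component of $\vec\delta_i$ cannot simply be deleted componentwise: it must be split off as a genuine Witt summand (by the same successive-approximation device as in your Stage 1) and Witt-added to $\vec\mu$, which stays in $W_n(R_T)$, so the normalization already achieved is not disturbed.
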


\s

Consider the field $K=k(\vec y)$ as above, where precisely
one prime divisor $P\in R_T^+$ ramifies, with
\begin{gather}\label{EqNew1}
\begin{array}{l}
\text{$\beta_i=
\frac{Q_i}{P^{\lambda_i}}$, $Q_i\in R_T$
such that $\lambda_i\geq 0$},\\
\text{if $\lambda_i=0$ then $Q_i=0$,}\\
\text{if $\lambda_i>0$ then $\gcd(\lambda_i,p)=1$,
$\gcd(Q_i,P)=1$ and $\deg Q_i<\deg P^{\lambda_i}$},\\
\lambda_1>0.
\end{array}
\end{gather}

A particular case of Theorem \ref{T2.3} suitable for our study
is given in the following proposition.

\begin{proposition}\label{P2.6} Assume that every extension $K_1/k$
that meets the conditions of {\rm (\ref{EqNew1})} 
satisfies that $K_1\subseteq k(\Lambda_{P^{\alpha}})$ for some
$\alpha\in{\ma N}$. Let $K/k$ be the extension defined by
$K=k(\vec{y})$ where $\wp(\vec{y})=  \vec y^p\Witt - \vec y=
\vec \beta$ with $\vec \beta=(\beta_1,\ldots,\beta_n)$,
$\beta_i$ given in
normal form: $\beta_i\in{\ma F}_q$
or $\beta_i=\frac{Q_i}{P^{\lambda_i}}$, 
$Q_i\in R_T$ and $\lambda_i>0$,
$\gcd(\lambda_i,p)=1$, $\gcd(Q_i,P)=1$ and $\deg Q_i\leq
\deg P^{\lambda_i}$. Then $K\subseteq {\ma F}_{q^{p^n}}
\lam \alpha$ for some $\alpha\in{\ma F}_q$.
\end{proposition}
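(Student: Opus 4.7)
The plan is to split the defining Witt vector $\vec\beta$ into a constant summand and a summand ramified only at $P$, then handle each piece separately. Set $\vec\mu=(\mu_1,\ldots,\mu_n)\in W_n({\ma F}_q)$ by $\mu_i=\beta_i$ when $\beta_i\in{\ma F}_q$ and $\mu_i=0$ otherwise, and put $\vec\delta:=\vec\beta\Witt-\vec\mu$ via the Witt subtraction laws. Since $\vec\beta=\vec\delta\Witt+\vec\mu$, any solutions $\wp(\vec z)=\vec\mu$ and $\wp(\vec y')=\vec\delta$ combine into $\vec y:=\vec z\Witt+\vec y'$ with $\wp(\vec y)=\vec\beta$, so $K=k(\vec y)\subseteq k(\vec z)\cdot k(\vec y')$.

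For the constant piece, $\vec\mu\in W_n({\ma F}_q)$ defines an Artin--Schreier--Witt extension of ${\ma F}_q$ of degree dividing $p^n$, and since ${\ma F}_q$ has a unique extension of each degree, $k(\vec z)\subseteq{\ma F}_{q^{p^n}}\cdot k$. For the $P$-ramified piece, I would bring $\vec\delta$ into the normal form of (\ref{EqNew1}) by modifying it by elements of $\wp(W_n(k))$, which does not alter $k(\vec y')$: each nonzero component should take the shape $Q_i/P^{\lambda_i}$ with $p\nmid\lambda_i$, $\gcd(Q_i,P)=1$ and $\deg Q_i<\deg P^{\lambda_i}$. If the normalized $\delta_1$ is nonzero, the standing hypothesis applies directly and yields $k(\vec y')\subseteq\lam\alpha$ for some $\alpha\in{\ma N}$. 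If instead $\delta_1=0$, the initial zeros make $\vec\delta$ a Verschiebung image, so the same field is cut out by a strictly shorter Witt vector, and I would complete the argument by induction on $n$; the base case $n=1$ is immediate, since either $\beta_1\in{\ma F}_q$ (giving a constant extension in ${\ma F}_{q^p}\cdot k$) or $\beta_1=Q_1/P^{\lambda_1}$ and (\ref{EqNew1}) applies.

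Combining the two pieces produces
\[
K\subseteq k(\vec z)\cdot k(\vec y')\subseteq{\ma F}_{q^{p^n}}\cdot\lam\alpha,
\]
as claimed. The main obstacle is the normalization step: the components produced by the Witt subtraction $\vec\beta\Witt-\vec\mu$ are governed by Schmid's universal polynomials, so each $\delta_i$ starts as a rational combination of the earlier $\beta_j$ and $\mu_j$ whose denominator need not a priori be a pure power of $P$ and whose $P$-exponent need not be coprime to $p$. One must verify, inductively in $i$ and using $\wp$-equivalences at every level, that the denominator reduces to $P^{\lambda_i}$ with $p\nmid\lambda_i$ and that the numerator is reduced modulo $P^{\lambda_i}$. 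This is precisely where the arithmetic of Witt vectors developed by Schmid, already invoked for Theorem \ref{T2.3}, does the heavy lifting.
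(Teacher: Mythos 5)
Your argument is correct and follows essentially the same route as the paper: split $\vec\beta$ into a constant Witt vector plus a part supported only at $P$, absorb the constant part into ${\ma F}_{q^{p^n}}$, strip the leading zeros of the $P$--part via Witt's decomposition, and apply the standing hypothesis on (\ref{EqNew1}). The normalization you flag as the main obstacle is precisely what the paper sidesteps by quoting Theorem \ref{T2.3} (Schmid's normal form, proved in \cite{MaRzVi2013}), which directly gives $\vec\beta=\vec\varepsilon\Witt + \vec\gamma$ with $\varepsilon_i\in{\ma F}_q$ and each $\gamma_i$ either $0$ or $Q_i/P^{\lambda_i}$ in strict normal form, so citing that theorem closes your remaining step.
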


\proof From Theorem \ref{T2.3} we have that we can
decompose the vector $\vec \beta$ as
$\vec \beta=\vec \varepsilon\Witt + \vec \gamma$ with
$\varepsilon_i\in{\ma F}_q$ for all $1\leq i\leq n$ and
$\gamma_i=0$
or $\gamma_i=\frac{Q_i}{P^{\lambda_i}}$,
$Q_i\in R_T$ and $\lambda_i>0$,
$\gcd(\lambda_i,p)=1$, $\gcd(Q_i,P)=1$ and $\deg Q_i<
\deg P^{\lambda_i}$. 

Let
$\gamma_1=\cdots=\gamma_r=0$, and
$\gamma_{r+1}\notin {\ma F}_q$.
We have $K\subseteq k(\vec \varepsilon)k(\vec \gamma)$.
Now $k(\vec \varepsilon)\subseteq {\ma F}_{q^{p^n}}$ and
$k(\vec \gamma)=k(0,\ldots,0,\gamma_{r+1},\ldots,\gamma_n)$.

For any Witt vector $\vec x=(x_1,\ldots,x_n)$ we have
the decomposition given by Witt himself
\begin{align*}
\vec x=&(x_1,0,0,\ldots,0)\Witt +(0,x_2,0,\ldots,0)\Witt +\cdots
\Witt + (0,\ldots, 0,x_j,0,\ldots,0)\\
&\Witt +(0,\ldots, 0, x_{j+1}, \ldots, x_n)
\end{align*}
for each $0\leq j\leq n-1$. It follows that
$k(\vec \gamma)=k(\gamma_{r+1},\ldots,\gamma_n)$.
Since this field fulfills the conditions of (\ref{EqNew1}),
we have $k(\vec \gamma)\subseteq k(\Lambda_{P^{\alpha}})$
for some $\alpha\in{\ma N}$. The result follows. \qed

\s

\begin{remark}\label{R2.6'}
The prime $\p$ can be handled in the same way. The conditions
(\ref{EqNew1}) for $\p$ are the following. Let $K=k(\vec \mu)$
with $\mu_j=f_j(T)\in R_T$, with $f_j(0)=0$ for all $j$ and
either $f_j(T)=0$ or $f_j(T)\neq 0$ and $p\nmid \deg f_j(T)$.
The condition $f_j(0)=0$ means that the infinite prime for
$T^{\prime}=1/T$ is either decomposed or ramified in each layer,
that is, its inertia degree is $1$ in $K/k$.
In this case with the change of variable $T^{\prime}=1/T$
the hypotheses in Proposition \ref{P2.6} say that
any field meeting these conditions satisfies that
$K\subseteq k(\Lambda_{T^{\prime m}})=
k(\Lambda_{T^{-m}})$ for some $m\in{\ma N}$.
However, since the degree of the extension $K/k$
is a power of $p$ we must have that $K\subseteq k(\Lambda_{T^{-m}})^{
{\ma F}_q^{\ast}}=L_{m-1}$.
\end{remark}

With the notation of Theorem \ref{T2.3}
 we obtain that if $\vec z_i^p\Witt - \vec z_i
=\vec \delta_i$, $1\leq i\leq r$ and if $\vec v^p\Witt -\vec v=\vec \mu$,
then $L=k(\vec y)\subseteq k(\vec z_1,\ldots,\vec z_r,\vec v)=
k(\vec z_1)\ldots k(\vec z_r) k(\vec v)$. Therefore if Theorem
\ref{T2.1} holds for each $k(\vec z_i)$, $1\leq i\leq r$ and for
$k(\vec v)$, then it holds for $L$.

From Theorem \ref{T2.3}, Proposition \ref{P2.6} and
the remark after this proposition, we obtain that
to prove Theorem \ref{T2.1} it suffices to show that
any field extension $K/k$ meeting the
conditions of (\ref{EqNew1}) satisfies that 
either $K\subseteq k(\Lambda_{P^{\alpha}})$
for some $\alpha\in{\ma N}$ or $K\subseteq L_m$
for some $m\in{\ma N}$.
It suffices to study the case for $P\in R_T$.

Next we study the behavior of $\p$ in
an arbitrary cyclic
extension $K/k$ of degree $p^n$.

Consider first the case $K/k$ cyclic of degree $p$.
Then $K=k(y)$ where $y^p-y=\alpha\in k$. 
The equation can be 
normalized as:
\begin{equation}\label{E2.2}
y^p-y=\alpha=\sum_{i=1}^r\frac{Q_i}{P_i^{e_i}} + f(T)=
\frac{Q}{P_1^{e_1}\cdots P_r^{e_r}}+f(T),
\end{equation}
where $P_i\in R_T^+$, $Q_i\in R_T$, 
$\gcd(P_i,Q_i)=1$, $e_i>0$, $p\nmid e_i$, $\deg Q_i<
\deg P_i^{e_i}$, $1\leq i\leq r$,
$\deg Q<\sum_{i=1}^r\deg P_i^{e_i}$, $f(T)\in R_T$,
with $p\nmid \deg f$ when $f(T)\not\in {\ma F}_q$
and $f(T)\notin \wp({\ma F}_q)$ when $f(T)
\in {\ma F}_q^{\ast}$.

We have that the finite primes ramified in $K/k$ are precisely
$P_1,\ldots,P_r$ (see \cite{Has35}). With respect to $\p$ we have
the following well known result. We present a proof for the
sake of completeness.

\begin{proposition}\label{P2.4}
Let $K=k(y)$ be given by {\rm (\ref{E2.2})}. Then
the prime $\p$ is
\l
\item decomposed if $f(T)=0$.
\item inert if $f(T)\in {\ma F}_q$ and $f(T)\not\in \wp({\ma F}_q)$.
\item ramified if $f(T)\not\in {\ma F}_q$ (thus $p\nmid\deg f$).
\end{list}
\end{proposition}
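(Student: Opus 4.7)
The approach is to localize: the behavior of $\p$ in $K/k$ is determined by the completion $K_{\p}/k_{\p}$, where $k_{\p}\cong {\ma F}_q((T'))$ with uniformizer $T'=1/T$ and $v_{\p}(T)=-1$. Let $\mathcal{O}_{\p}$ be the valuation ring and $\mathfrak{m}_{\p}$ the maximal ideal. The splitting type of $\p$ is read off from the Artin--Schreier equation $y^p-y=\alpha$ over $k_{\p}$: if $\alpha\in\wp(k_{\p})$ then $\p$ splits; otherwise $K_{\p}/k_{\p}$ is a cyclic degree $p$ extension, which is unramified if a representative of $\alpha$ modulo $\wp(k_{\p})$ can be chosen in $\mathcal{O}_{\p}$, and totally ramified otherwise.

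First I would observe that the finite part of $\alpha$ contributes nothing locally at $\p$. Since $\deg Q_i<\deg P_i^{e_i}$, each term $Q_i/P_i^{e_i}$ is a rational function of negative degree, so $v_{\p}(Q_i/P_i^{e_i})=\deg P_i^{e_i}-\deg Q_i>0$. Hence $\sum_i Q_i/P_i^{e_i}\in\mathfrak{m}_{\p}$. In the complete local field $k_{\p}$ the map $\wp\colon\mathfrak{m}_{\p}\to\mathfrak{m}_{\p}$ is surjective (given $u\in\mathfrak{m}_{\p}$, the sequence $y_0=-u$, $y_{n+1}=y_n^p-y_n-u+y_n$ converges since each correction lives in a higher power of $\mathfrak{m}_{\p}$). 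Therefore $\sum_i Q_i/P_i^{e_i}\in\wp(\mathfrak{m}_{\p})$, and modulo $\wp(k_{\p})$ we may replace $\alpha$ by $f(T)$.

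Now I would do the three cases. For (a), if $f=0$ then $\alpha\in\wp(k_{\p})$, the Artin--Schreier equation already has a root in $k_{\p}$, so $K_{\p}=k_{\p}$ and $\p$ splits completely; since $[K:k]=p$, this means $\p$ decomposes into $p$ distinct primes. For (b), if $f\in{\ma F}_q\setminus\wp({\ma F}_q)$, then $v_{\p}(\alpha)=0$ and the reduction $\bar{\alpha}=f$ lies in the residue field ${\ma F}_q$ but not in $\wp({\ma F}_q)$; consequently $K_{\p}/k_{\p}$ is the unramified extension of degree $p$, whose residue extension is ${\ma F}_q(\bar y)/{\ma F}_q$ with $\bar y^p-\bar y=f$, and $\p$ is inert. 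For (c), if $f\not\in{\ma F}_q$ with $m:=\deg f$ and $p\nmid m$, then $v_{\p}(f)=-m<0$.

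The main obstacle is to rule out, in case (c), that $f$ is $\wp$-equivalent to an element of $\mathcal{O}_{\p}$. For this I would use the standard valuation identity: for $\delta\in k_{\p}$ with $v_{\p}(\delta)=-s<0$, one has $v_{\p}(\wp(\delta))=v_{\p}(\delta^p-\delta)=-ps$, since $v_{\p}(\delta^p)=-ps<-s$. If $f-\wp(\delta)\in\mathcal{O}_{\p}$ for some $\delta$ with $v_{\p}(\delta)<0$, then comparing polar parts forces $-ps=-m$, contradicting $\gcd(m,p)=1$; and if $v_{\p}(\delta)\geq 0$ then $v_{\p}(f-\wp(\delta))=v_{\p}(f)=-m<0$. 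Hence no such normalization exists, and the classical local Artin--Schreier theory shows that $K_{\p}/k_{\p}$ is totally ramified of degree $p$ with conductor exponent $m+1$; thus $\p$ is ramified, completing the proof.
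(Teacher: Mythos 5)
Your argument is correct, but it takes a genuinely different route from the paper. You work locally: you complete at $\p$, reduce $\alpha$ modulo $\wp(k_{\p})$ by showing the finite part $\sum_i Q_i/P_i^{e_i}$ lies in the maximal ideal where $\wp$ is surjective, and then read off the splitting type of $\p$ from the class of $f(T)$ via the standard local Artin--Schreier dichotomy (split if $\alpha\in\wp(k_{\p})$; unramified and inert if $\alpha$ is $\wp$-equivalent to an integral element whose residue is not in $\wp({\ma F}_q)$; totally ramified otherwise, which you rule in for case (c) by the valuation identity $v_{\p}(\wp(\delta))=p\,v_{\p}(\delta)$ for $v_{\p}(\delta)<0$ together with $p\nmid\deg f$). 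The paper instead stays global: in case (a) it shows the decomposition group is trivial by a direct valuation computation with the factorization $y^p-y=\prod_{i=0}^{p-1}(y-i)$ and a Galois conjugation argument; in case (b) it adjoins $b$ with $b^p-b=f$ in the constant extension ${\ma F}_{q^p}$ and derives a contradiction from case (a) applied to $k(y+b)$, whose defining element $\alpha-f$ has negative degree; and in case (c) it simply invokes Hasse's normal form. Your approach is more uniform (all three cases flow from one local criterion) and gives extra information for free (e.g.\ the conductor exponent $m+1$ at $\p$ in case (c)), at the cost of importing completions and classical local Artin--Schreier theory that the paper avoids; the paper's proof is more elementary and self-contained modulo the citation of Hasse for the ramified case. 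Two small wording points: in case (a) there is no single completion ``$K_{\p}$'' --- rather $K\otimes_k k_{\p}$ splits as $p$ copies of $k_{\p}$, so each prime above $\p$ has completion $k_{\p}$; and in case (c) the phrase ``comparing polar parts forces $-ps=-m$'' should be read as: if $-ps\neq-m$ then $v_{\p}(f-\wp(\delta))=\min\{-m,-ps\}<0$, so $f-\wp(\delta)\notin{\mathcal O}_{\p}$ anyway, and $-ps=-m$ is impossible since $\gcd(m,p)=1$. With these clarifications the proof is complete.
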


\proof
First consider the case $f(T)=0$. Then $v_{\p}(\alpha)=
\deg(P_1^{e_1}\cdots P_r^{e_r})-\deg Q>0$.
Therefore $\p$ is unramified. Now $y^p-y=\prod_{i=0}^{p-1}
(y-i)$. Let ${\eu P}_{\infty}\mid\p$. Then 
\[
v_{{\eu P}_{\infty}}(y^p-y)=\sum_{i=0}^{p-1}v_{{\eu P}_{\infty}}
(y-i)=e({\eu P}_{\infty}|\p)v_{\p}(\alpha)=v_{\p}(\alpha)>0.
\]
Therefore, there exists $0\leq i\leq p-1$ such that
$v_{\p}(y-i)>0$. Without loss of generality we may assume that $i=0$.
Let $\sigma\in \Gal(K/k)\setminus \{\Id\}$. Assume that
${\eu P}_{\infty}^{\sigma}={\eu P}_{\infty}$. We have $y^{\sigma}=
y-j$, $j\neq 0$. Thus, on the one hand
\[
v_{{\eu P}_{\infty}}(y-j)=v_{{\eu P}_{\infty}}(y^{\sigma})=
v_{\sigma({\eu P}_{\infty})}(y)=v_{{\eu P}_{\infty}}(y)>0.
\]
On the other hand, since
$v_{{\eu P}_{\infty}}(y)>0=v_{{\eu P}_{\infty}}(j)$,
it follows that 
\[
v_{{\eu P}_{\infty}}(y-j)=\min\{
v_{{\eu P}_{\infty}}(y),v_{{\eu P}_{\infty}}(j)\}=0.
\]
This contradiction shows that
${\eu P}_{\infty}^{\sigma}\neq {\eu P}_{\infty}$
so that $\p$ decomposes in $K/k$.

Now we consider the case $f(T)\neq 0$. If $f(T)\not\in \F$, then
$\p$ ramifies since it is in the normal form prescribed by
Hasse \cite{Has35}.

The last case is when $f(T)\in\F$, $f(T)\not\in \wp(\F)$. Let
$b\in {\ma F}_{q^p}$ with $b^p-b=a=f(T)$. Since $\deg \p=1$,
$\p$ is inert in the constant extension $k(b)/k$ (\cite[Theorem 6.2.1]{Vil2006}).
Assume that $\p$ decomposes in $k(y)/k$. We have 
the following diagram
\[
\xymatrix{
k(y)\ar@{-}[r]^{\p}_{\text{inert}}\ar@{-}[d]_{\substack{\p\\ \text{decomposes}}}
&k(y,b)\ar@{-}[d]\\ k\ar@{-}[r]^{\p}_{\text{inert}}&k(b)
}
\]

The decomposition group of $\p$ in $k(y,b)/k$ is
$\Gal(k(y,b)/k(y))$. Therefore $\p$ is inert  in every
field of degree $p$ over $k$ other than $k(y)$. Since the
fields of degree $p$ are $k(y+ib), k(b)$, $0\leq i\leq p-1$,
in $k(y+b)/k$ we have
\[
(y+b)^p-(y+b)=(y^p-y)+(b^p-b)=\alpha-a =\frac{Q}{P_1^{
e_1}\cdots P_r^{e_r}}
\]
with $\deg (\alpha-a)<0$. Hence, by the first part, $\p$ decomposes
in $k(y+b)/k$ and in $k(y)/k$ which is impossible.
Thus $\p$ is inert in $k(y)/k$. \qed

\s

The general case for the behavior of $\p$ 
in a cyclic $p$--extension is given in
\cite[Proposition 5.6]{MaRzVi2013} and it is a consequence of Proposition
\ref{P2.4}.

\begin{proposition}\label{P2.5}
Let $K/k$ be given as in Theorem {\rm{\ref{T2.3}}}. Let 
$\mu_1=\cdots=\mu_s=0$, $\mu_{s+1}\in {\ma F}_q^{\ast}$, 
$\mu_{s+1}\not\in \wp({\ma F}_q)$ and finally, let $t+1$ be the
first index with $f_{t+1}\not\in{\ma F}_q$ (and therefore $p\nmid \deg f_{t+1}$).
Then the ramification index of $\p$ is $p^{n-t}$, the inertia degree
of $\p$ is $p^{t-s}$ and the decomposition number
of $\p$ is $p^s$. More precisely, if $\Gal(K/k)=\langle \sigma\rangle
\cong C_{p^n}$, then the inertia group of $\p$ is ${\eu I}=\langle \sigma^{p^t}
\rangle$ and the decomposition group of $\p$ is ${\eu D}=\langle
\sigma^{p^s}\rangle$. \qed
\end{proposition}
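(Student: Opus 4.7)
The plan is to prove Proposition \ref{P2.5} by induction on $n$, with the base case $n=1$ being exactly Proposition \ref{P2.4}. For the inductive step, fix a generator $\sigma$ of $\Gal(K/k)\cong C_{p^n}$ and consider the maximal subextension $K_{n-1}:=K^{\langle\sigma^{p^{n-1}}\rangle}$, cyclic of degree $p^{n-1}$ over $k$ and defined by the truncated Witt vector $(\beta_1,\ldots,\beta_{n-1})$, whose infinite-prime part is $(\mu_1,\ldots,\mu_{n-1})$. Setting $t':=\min(t,n-1)$, the induction hypothesis applied to $K_{n-1}/k$ yields ramification index $p^{n-1-t'}$, inertia degree $p^{t'-s}$, and decomposition number $p^s$ for $\p$ in $K_{n-1}/k$, with the inertia and decomposition groups being the unique subgroups of $\Gal(K_{n-1}/k)\cong C_{p^{n-1}}$ of the correct orders.

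Next I analyze the top layer $K/K_{n-1}$, which is an Artin--Schreier extension generated by $y_n$ satisfying $y_n^p-y_n=\Phi$, where $\Phi\in K_{n-1}$ is the Schmid polynomial obtained by expanding the $n$-th component of $\vec{y}^p\Witt{-}\vec{y}=\vec{\beta}$. To each prime $\mathfrak{P}$ of $K_{n-1}$ lying over $\p$, I apply the local version of Proposition \ref{P2.4}: the behavior of $\mathfrak{P}$ in $K/K_{n-1}$ is dictated by which case the Hasse normal form of $\Phi$ falls into at $\mathfrak{P}$, namely split if $\Phi\in\wp(K_{n-1,\mathfrak{P}})$, inert if $\Phi$ is cohomologous to a constant in the residue field at $\mathfrak{P}$ not in the image of $\wp$ of that residue field, and ramified if $\Phi$ has a pole of order prime to $p$ at $\mathfrak{P}$ modulo $\wp$. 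These three cases occur exactly when $n\le s$, $s<n\le t$, and $n>t$, respectively: in the first range $\mu_n=0$ and the Schmid cross-terms can be absorbed into $\wp(\mathcal{O}_{\mathfrak{P}})$ by choosing $y_1,\ldots,y_{n-1}$ integral at $\mathfrak{P}$; in the middle range $\mu_n\in\F$ and the same reduction lands $\Phi$ on a constant whose non-triviality modulo $\wp(\F)$ descends from the hypothesis $\mu_{s+1}\notin\wp(\F)$; in the last range $\mu_n\notin\F$ with $p\nmid\deg\mu_n$ when $n=t+1$, so $\Phi$ retains a pole modulo $\wp$ and the layer ramifies.

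Combining these three patterns in the inductive step produces ramification index $p^{n-t}$, inertia degree $p^{t-s}$, and decomposition number $p^s$ for $\p$ in $K/k$; since $\Gal(K/k)\cong C_{p^n}$ has a unique subgroup of each order, the inertia group must be $\langle\sigma^{p^t}\rangle$ and the decomposition group $\langle\sigma^{p^s}\rangle$. The main obstacle is the Witt-arithmetic bookkeeping in the second paragraph: showing that $\Phi\in K_{n-1}$ can be put into the claimed form at $\mathfrak{P}$ modulo $\wp(K_{n-1,\mathfrak{P}})$ requires a careful application of Schmid's addition formulas from \cite{Sch36}, exploiting the vanishing $\mu_1=\cdots=\mu_s=0$ to ensure that solutions to the earlier Artin--Schreier equations can be taken integral at $\mathfrak{P}$, and therefore that the Witt cross-terms at step $n$ lie in the local image of $\wp$. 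This is the step where the argument genuinely uses the Witt-vector structure rather than merely iterating the classical Artin--Schreier case.
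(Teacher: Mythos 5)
Your outline gets the numerology and the group-theoretic frame right (in a cyclic $p^n$-extension the intermediate fields are totally ordered, so it suffices to locate the decomposition and inertia fields among the $K_j$), but the decisive step is not actually carried out: the assertion that, at a prime ${\eu P}\mid\p$ of $K_{n-1}$, the Schmid element $\Phi=z_{n-1}+\beta_n$ is, modulo $\wp(K_{n-1,{\eu P}})$, trivial for $n\le s$, a constant outside $\wp$ of the residue field for $s<n\le t$, and of pole order prime to $p$ for $n>t$, is essentially the content of the proposition itself, and you defer it wholesale to ``a careful application of Schmid's addition formulas''. Worse, the heuristic you give for the middle range would fail if taken literally: for $s+1<n\le t$ there is no hypothesis on $\mu_n$ (it may be $0$ or lie in $\wp(\F)$), and the hypothesis $\mu_{s+1}\notin\wp(\F)$ cannot simply be re-used at those layers, because any $a\in\F\setminus\wp(\F)$ already lies in $\wp({\ma F}_{q^p})$ (a root of $X^p-X=a$ generates ${\ma F}_{q^p}$), while the residue field at ${\eu P}$ contains ${\ma F}_{q^p}$ as soon as one inert layer has occurred. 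Hence the inertia degree $p^{t-s}$ cannot be certified one constant at a time; it must come from the fact that the Witt vector of constants $(\mu_{s+1},\ldots)$ has order $p^{t-s}$ modulo $\wp$ precisely because its \emph{first} entry is outside $\wp(\F)$, i.e.\ from the cross terms $z_{n-1}$ that you set aside. The same issue recurs in the ramified range: for $n>t+1$ the component $f_n$ may be constant or zero, and the pole of $\Phi$ at ${\eu P}$ of order prime to $p$ has to be extracted from $z_{n-1}$ (in essence Schmid's recursion $M_n=\max\{pM_{n-1},\lambda_n\}$ localized at $\p$), again the omitted computation. There are also minor boundary cases your induction hypothesis does not literally cover (e.g.\ $t=n-1$ or $s=n-1$, where $K_{n-1}$ has no component $f_{t+1}\notin\F$ or no $\mu_{s+1}\in\F^{\ast}$), which would need separate, if easy, treatment.

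For comparison: the paper offers no proof of Proposition \ref{P2.5} at all; it quotes it from \cite[Proposition 5.6]{MaRzVi2013} and only remarks that it rests on Proposition \ref{P2.4}, so there is no in-paper argument your sketch could be matched against. If you want a self-contained proof, a cleaner route than a layer-by-layer induction is to work in the completion at $\p$: every block $\vec\delta_i$ of Theorem \ref{T2.3} has all components of positive valuation at $\p$ (since $\deg Q_{ij}<\deg P_i^{e_{ij}}$), hence lies in $\wp\big(W_n({\mathcal O}_{\p})\big)$ by a Hensel-type argument, so the local Artin--Schreier--Witt class of $\vec\beta$ at $\p$ equals that of $\vec\mu=(0,\ldots,0,\mu_{s+1},\ldots)$; one then computes directly that this class has order $p^{n-s}$ (giving ${\eu D}=\langle\sigma^{p^s}\rangle$) and that its unramified (constant) part has order $p^{t-s}$ (giving ${\eu I}=\langle\sigma^{p^t}\rangle$). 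That computation is exactly the Witt-vector content your proposal acknowledges but does not supply.
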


\s

From Propositions \ref{P2.4} and \ref{P2.5} we obtain

\begin{proposition}\label{P2.5'}
If $K$ is a field defined by an equation of the
type given in {\rm (\ref{EqNew1})}, then 
$K/k$ is a cyclic extension of degree $p^n$, $P$
is the only ramified prime, it is fully ramified and $\p$ is 
fully decomposed. 

Similarly, if $K=k(\vec v)$ where $v_i=f_i(T)\in R_T$, $f_i(0)=0$
for all $1\leq i\leq n$ and $f_1(T)\notin {\ma F}_q$,
$p\nmid \deg f_1(T)$, then $\p$ is the only
ramified prime in $K/k$, it is fully ramified and the
zero divisor of $T$ which is the infinite prime in $R_{1/T}$,
is fully decomposed.\qed
\end{proposition}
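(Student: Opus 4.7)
The plan is to verify the four claims of the first statement and then derive the second from the first by the change of variable $T\mapsto T'=1/T$.

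Since $K=k(\vec y)$ with $\wp(\vec y)=\vec\beta$, Artin--Schreier--Witt theory makes $K/k$ a cyclic extension of degree dividing $p^n$. To see that the degree is exactly $p^n$, I would examine the first layer: $\beta_1=Q_1/P^{\lambda_1}$ with $\lambda_1>0$ and $p\nmid\lambda_1$ has $v_P(\beta_1)=-\lambda_1$, a negative integer not divisible by $p$. Since $v_P(\wp(a))$ is always either non-negative or a negative multiple of $p$, it follows that $\beta_1\notin\wp(k)$, hence $[k(y_1):k]=p$, and the uniqueness of Schmid's normal form in Theorem \ref{T2.3} then forces all higher levels to be non-trivial as well, giving $[K:k]=p^n$. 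Because only $P$ appears in the denominators of the components of $\vec\beta$, the Hasse--Schmid conductor description of ramified primes in Artin--Schreier--Witt extensions shows that $P$ is the only finite prime that can ramify in $K/k$.

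Full decomposition of $\p$ follows from Proposition \ref{P2.5}: conditions (\ref{EqNew1}) correspond to taking $\mu_j=0$ for every $j$, i.e.\ to $s=t=n$ in that proposition, so the decomposition number of $\p$ equals $p^s=p^n$. To upgrade ``$P$ is the only possible ramified finite prime'' to ``$P$ is totally ramified'', I would let $I$ be the inertia group of $P$ in $G=\Gal(K/k)\cong C_{p^n}$ and set $M=K^I$. Then $M/k$ is unramified at $P$ by construction, unramified at $\p$ because $\p$ is already unramified in $K/k$, and unramified at every other finite prime by the previous paragraph; thus $M/k$ is everywhere unramified. Since $g_k=0$, Riemann--Hurwitz forces $M/k$ to be a constant extension of some degree $m$; but in any such extension the degree-one prime $\p$ is inert with residue degree $m$, contradicting the full decomposition of $\p$ in $M/k$ (inherited from $K/k$) unless $m=1$. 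Therefore $M=k$, $I=G$, and $P$ is totally ramified.

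For the second statement, apply the change of variable $T'=1/T$, which identifies $k=\F(T')$, sends $\p$ to $(T')\in R_{T'}^+$, and sends the zero divisor of $T$ to the new infinite prime. Using $f_i(0)=0$, write $f_i(T)=\sum_{j=1}^{d_i}a_{ij}T^j = Q_i(T')/T'^{d_i}$ with $\gcd(Q_i,T')=1$ and $\deg_{T'} Q_i<d_i$; the hypothesis $p\nmid\deg f_1$ gives $d_1>0$ and $p\nmid d_1$, and after replacing $\vec v$ by an equivalent Witt vector in Schmid normal form (Theorem \ref{T2.3}) the analogous divisibility holds for every $i$ with $d_i>0$. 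These are precisely the hypotheses of (\ref{EqNew1}) for the $T'$-presentation of $K$, so Part 1 yields the desired conclusion. The main obstacle I expect is the inertia-field argument: identifying $K^I$ as a constant extension via Riemann--Hurwitz and then using the full decomposition of $\p$ to collapse it to $k$; the remaining pieces are routine bookkeeping with Theorem \ref{T2.3} and Proposition \ref{P2.5}.
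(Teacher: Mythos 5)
Your argument is correct, but part of it takes a noticeably heavier route than what the paper intends, since the paper records this proposition as an immediate consequence of Propositions \ref{P2.4} and \ref{P2.5} together with the Hasse--Schmid normal form. Your treatment of $\p$ is exactly the intended one: all $\mu_j=0$ in Proposition \ref{P2.5}, so the decomposition number is $p^n$. For the total ramification of $P$, however, the short argument is that $\lambda_1>0$ puts $\beta_1=Q_1/P^{\lambda_1}$ in Hasse normal form, so $P$ already ramifies in the unique degree-$p$ subextension $k(y_1)$; since the inertia group of $P$ in the cyclic group $\langle\sigma\rangle\cong C_{p^n}$ is some $\langle\sigma^{p^j}\rangle$, and this maps onto $\Gal(k(y_1)/k)$ only when $j=0$, the prime $P$ is totally ramified. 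Your alternative --- passing to the inertia fixed field, showing it is everywhere unramified, invoking Riemann--Hurwitz to see it is a constant extension, and using the full decomposition of the degree-one prime $\p$ to collapse it to $k$ --- is valid (and correctly ordered, since you establish first that $\p$ is unramified), but it brings in genus theory where elementary group theory suffices. Two smaller points: the degree claim follows cleanly from $\beta_1\notin\wp(k)$ alone, because $p^{n-1}\vec{\beta}=(0,\ldots,0,\beta_1^{p^{n-1}})$ lies in $\wp\bigl(W_n(k)\bigr)$ only if $\beta_1\in\wp(k)$; your appeal to ``uniqueness of Schmid's normal form'' is vaguer than needed (total ramification of $P$ also forces the degree at once). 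Finally, your reduction of the second statement via $T'=1/T$ is precisely the paper's Remark \ref{R2.6'}; the one detail you gloss over is that renormalizing the translated vector (for indices $i\geq 2$ with $p\mid\deg f_i$) stays inside the set of proper fractions whose denominator is a power of $T'$, so no polynomial part or constant is introduced and the hypotheses of (\ref{EqNew1}) are indeed met --- routine, but worth saying explicitly.
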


\s

We have reduced the proof of Theorem \ref{T2.1} to prove 
that any extension of the type given in Proposition \ref{P2.5'} is 
contained in either $k(\Lambda_{P^{\alpha}})$ for
some $\alpha\in{\ma N}$ or in $L_m$ for some 
$m\in{\ma N}$. The second case is a consequence of the first
one with the change of variable $T^{\prime}=1/T$.

Let $n,\alpha\in{\ma N}$. Denote by $v_n(\alpha)$ 
the number of cyclic groups of order $p^n$ 
contained in $\G {P^{\alpha}}\cong
\Gal(k(\Lambda_{P^{\alpha}}/k)$. We have
that $v_n(\alpha)$ is the number of cyclic field
extensions $K/k$ of degree $p^n$
and $K\subseteq k(\Lambda_{P^{\alpha}})$.
Every such extension satisfies that its 
conductor ${\eu F}_K$ divides $P^{\alpha}$.

Let $t_n(\alpha)$ be the number of cyclic field
extensions $K/k$ of degree $p^n$ such that $P$ is the only
ramified prime, it is fully ramified, $\p$ is
fully decomposed and its conductor
${\eu F}_K$ is a divisor of $P^{\alpha}$.
Since every cyclic extension $K/k$ of degree
$p^n$ such that $k\subseteq K\subseteq
k(\Lambda_{P^{\alpha}})$ satisfies these
conditions we have $v_n(\alpha)\leq t_n(\alpha)$.
If we prove $t_n(\alpha)\leq v_n(\alpha)$ then
every extension satisfying equation (\ref{EqNew1})
is contained in a cyclotomic extension and
Theorem \ref{T2.1} follows.

Therefore, to prove Theorem \ref{T2.1}, it suffices to
prove
\begin{gather}\label{EqNew2}
t_n(\alpha)\leq v_n(\alpha)\quad\text{for all}\quad
n, \alpha\in{\ma N}.
\end{gather}

\section{Wildly ramified extensions}\label{S4}

In this section we prove (\ref{EqNew2}) by induction on $n$ and as 
a consequence we obtain 
our main result, Theorem \ref{T2.1}.
First we compute $v_n(\alpha)$ for all $n,\alpha\in{\ma N}$.

\begin{proposition}\label{P4.1}
The number $v_n(\alpha)$ of different cyclic groups of order $p^n$ 
contained in $\G {P^{\alpha}}$ is
\[
v_n(\alpha)=\frac{q^{d\big(\alpha-\mayorchico{\alpha}
{p^n}\big)}-q^{d\big(\alpha-\mayorchico{\alpha}
{p^{n-1}}\big)}}{p^{n-1}(p-1\big)}=
\frac{q^{d\big(\alpha-\mayorchico{\alpha}
{p^{n-1}}\big)}\big(q^{d\big(\mayorchico{\alpha}{p^{n-1}}
-\mayorchico{\alpha}{p^{n}}\big)}-1\big)}{p^{n-1}(p-1)},
\]
where $\lceil x\rceil$ denotes the {\em ceiling function},
that is, $\lceil x\rceil$ denotes the minimum integer greater
than or equal to $x$.
\end{proposition}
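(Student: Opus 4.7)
The plan is to exploit the classical decomposition of $\G {P^{\alpha}}$ into its prime-to-$p$ and $p$-parts, and then count cyclic subgroups of order $p^n$ inside the $p$-Sylow by computing its $p^n$-torsion filtration. Concretely, if $\pi=P$ is a uniformizer and $U^{(i)}:=1+(\pi^i)/(\pi^\alpha)\subseteq \G {P^\alpha}$ are the principal unit subgroups, then
\[
\G{P^{\alpha}}\;\cong\;\big(R_T/(P)\big)^{\ast}\times\bigl(U^{(1)}/U^{(\alpha)}\bigr)\;\cong\; {\ma F}_{q^{d}}^{\ast}\times U,
\]
where $U:=U^{(1)}/U^{(\alpha)}$ is the $p$-Sylow subgroup (the first factor has order $q^{d}-1$, coprime to $p$). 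Every cyclic subgroup of order $p^{n}$ lies in $U$, so the count reduces to an internal computation in $U$.

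The key observation is that in characteristic $p$ one has $(1+v)^{p}=1+v^{p}$. Therefore raising to the $p$-th power multiplies the filtration index by $p$: if $u\in U^{(i)}\setminus U^{(i+1)}$ with $i<\alpha$, then $u^{p}\in U^{(ip)}\setminus U^{(ip+1)}$ (writing $u=1+a\pi^{i}+w$ with $w\in U^{(i+1)}$ and $a\not\equiv 0\bmod P$, one has $u^{p}=1+a^{p}\pi^{ip}+w^{p}$, with $w^{p}\in U^{(ip+p)}\subseteq U^{(ip+1)}$). Iterating $n$ times, $u^{p^{n}}=1$ in $U$ if and only if $ip^{n}\ge \alpha$, i.e.\ $i\ge \lceil \alpha/p^{n}\rceil$. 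Hence
\[
U[p^{n}]\;=\;U^{(\lceil \alpha/p^{n}\rceil)}/U^{(\alpha)},
\qquad
|U[p^{n}]|\;=\;q^{d\left(\alpha-\lceil \alpha/p^{n}\rceil\right)},
\]
since each successive quotient $U^{(j)}/U^{(j+1)}\cong R_{T}/(P)$ has order $q^{d}$.

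Finally, I would invoke the standard fact that in any abelian group, the elements of exact order $p^{n}$ are distributed in cyclic subgroups of order $p^{n}$, each contributing $\varphi(p^{n})=p^{n-1}(p-1)$ generators. Hence
\[
v_{n}(\alpha)\;=\;\frac{|U[p^{n}]|-|U[p^{n-1}]|}{p^{n-1}(p-1)}\;=\;\frac{q^{d\left(\alpha-\lceil \alpha/p^{n}\rceil\right)}-q^{d\left(\alpha-\lceil \alpha/p^{n-1}\rceil\right)}}{p^{n-1}(p-1)},
\]
and factoring out $q^{d(\alpha-\lceil \alpha/p^{n-1}\rceil)}$ yields the second form of the formula.

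The only delicate step is the verification that raising to $p$ \emph{strictly} preserves the filtration level (i.e.\ that $u\in U^{(i)}\setminus U^{(i+1)}$ implies $u^{p}\notin U^{(ip+1)}$), which is what lets us identify $U[p^{n}]$ with the single subgroup $U^{(\lceil \alpha/p^{n}\rceil)}$ rather than with something larger; this rests on $a\not\equiv 0\bmod P\Rightarrow a^{p}\not\equiv 0\bmod P$, together with $(i+1)p\ge ip+2$ since $p\ge 2$. Everything else is bookkeeping with the $q$-adic sizes of the successive quotients and the standard cyclic-subgroup counting identity.
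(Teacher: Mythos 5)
Your proof is correct and follows essentially the same route as the paper: both isolate the $p$--part $D_{P,P^{\alpha}}=U^{(1)}/U^{(\alpha)}$ of $\big(R_T/(P^{\alpha})\big)^{\ast}$, use the characteristic--$p$ identity $(1+v)^{p}=1+v^{p}$ to read off the exact $p$--power order of a principal unit from its filtration level, and then divide the number of elements of order exactly $p^{n}$ by $\varphi(p^{n})=p^{n-1}(p-1)$. The only difference is bookkeeping: you obtain $|U[p^{n}]|=q^{d(\alpha-\lceil\alpha/p^{n}\rceil)}$ directly as the order of the single filtration subgroup $U^{(\lceil\alpha/p^{n}\rceil)}$ and subtract, whereas the paper writes each element as $1+gP^{1+\gamma}$ and sums $\Phi(P^{\alpha-\gamma-1})$ over the admissible $\gamma$, evaluating a geometric series to reach the same count.
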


\proof
Let $P\in R_T^+$ and $\alpha\in{\ma N}$ with $\deg P=d$.
First we consider how many cyclic extensions of degree $p^n$
are contained in $\lam {\alpha}$. Since $\p$ is tamely
ramified in $\lam \alpha$, if $K/k$ is a cyclic extension
of degree $p^n$, $\p$ decomposes fully in $K/k$
(\cite[Theorem 12.4.6]{Vil2006}). We have
$\Gal(\lam \alpha/k)\cong \G{P^{\alpha}}$ and
the exact sequence
\begin{gather}\label{E4.1}
0\longrightarrow D_{P,P^{\alpha}}\longrightarrow
\G {P^{\alpha}}\stackrel{\varphi}{\longrightarrow} \G P\longrightarrow 0,
\end{gather}
where
\begin{eqnarray*}
\varphi\colon \G {P^{\alpha}}&\longrightarrow&\G P\\
A\bmod P^{\alpha}&\longmapsto& A\bmod P
\end{eqnarray*}
and $D_{P,P^{\alpha}}=\{N\bmod P^{\alpha}\mid
N\equiv 1\bmod P\}$. We safely may consider
$D_{P,P^{\alpha}}=\{1+hP\mid h\in R_T, \deg h<\deg P^{\alpha}=
d\alpha\}$.

We have $\G {P^{\alpha}}\cong \G P \times D_{P,P^{\alpha}}$
and $\G P\cong C_{q^d-1}$.
First we compute how many elements of order $p^n$
contains $\G {P^{\alpha}}$. These elements belong to 
$D_{P,P^{\alpha}}$. Let $A=1+hP\in D_{P,P^{\alpha}}$
of order $p^n$. We write $h=gP^{\gamma}$ with
$g\in R_T$, $\gcd (g,P)=1$ and $\gamma\geq 0$.
We have $A=1+gP^{1+\gamma}$. Since $A$ is
of order $p^n$, it follows that
\begin{gather}
A^{p^n}=1+g^{p^n}P^{p^n(1+\gamma)}\equiv 1\bmod P^{\alpha}\label{E4.2}\\
\intertext{and}
A^{p^{n-1}}=1+g^{p^{n-1}}P^{p^{n-1}(1+\gamma)}
\not\equiv 1\bmod P^{\alpha}\label{E4.3}.
\end{gather}

From (\ref{E4.2}) and (\ref{E4.3}) it follows that
\begin{gather}\label{E4.4}
p^{n-1}(1+\gamma)<\alpha\leq p^n(1+\gamma),\\
\intertext{which is equivalent to}
\mayor{\alpha}{p^n}-1\leq \gamma <\mayor{\alpha}
{p^{n-1}}-1.
\end{gather}
 Observe that for the existence
of at least one element of order $p^n$ we need
$\alpha>p^{n-1}$.

Now, for each $\gamma$ satisfying (\ref{E4.4}) we have
$\gcd (g,P)=1$ and $\deg g + d(1+\gamma) <d \alpha$,
that is, $\deg g<d(\alpha-\gamma-1)$. Thus, there exist
$\Phi(P^{\alpha-\gamma-1})$ such $g$'s, where
for any $N\in R_T$, $\Phi(N)$ denotes
the order of $\big|\G N\big|$.

Therefore the number of elements of order $p^n$ 
in $D_{P,P^{\alpha}}$ is 
\begin{gather}\label{E4.5}
\sum_{\gamma=\mayorchico{\alpha}{p^n}-1}^{
\mayorchico{\alpha}{p^{n-1}}-2}\Phi(P^{\alpha-\gamma-1})=
\sum_{\gamma'=\alpha-\mayorchico{\alpha}{p^{n-1}}+1}^{
\alpha-\mayorchico{\alpha}{p^n}}\Phi(P^{\gamma'}).
\end{gather}

Note that for any $1\leq r\leq s$ we have
\begin{align*}
\sum_{i=r}^{s}\Phi(P^i)&=\sum_{i=r}^s q^{d(i-1)}(q^d-1)=
(q^d-1)q^{d(r-1)}\sum_{j=0}^{s-r}q^{dj}\\
&=(q^d-1)q^{d(r-1)}\frac{q^{d(s-r+1)}-1}{q^d-1}=q^{ds}-q^{d(r-1)}.
\end{align*}
Hence (\ref{E4.5}) is equal to
\[
q^{d\big(\alpha-\mayorchico{\alpha}{p^n}\big)}-q^{d\big(\alpha-\mayorchico{\alpha}
{p^{n-1}}\big)}=q^{d\big(\alpha-\mayorchico{\alpha}
{p^{n-1}}\big)}(q^{d\big(\mayorchico{\alpha}{p^{n-1}}
-\mayorchico{\alpha}{p^{n}}\big)}-1).
\]

Since each cyclic group of order $p^n$ has $\varphi(p^n)=
p^{n-1}(p-1)$ generators, we obtain the result. \qed

\s

Note that if $K/k$ is any field contained in $\lam {\alpha}$ then the
conductor ${\eu F}_K$ of $K$ is a divisor of $P^{\alpha}$.

During the proof of Proposition \ref{P4.3}
we compute $t_1(\alpha)$, that is,
the number of cyclic extensions $K/k$ of degree $p$
such that $P$ is the only ramified prime (it is fully ramified),
$\p$ is decomposed in
$K/k$ and ${\eu F}_K\mid P^{\alpha}$ and we obtain
(\ref{EqNew2}) for the case $n=1$.
We have already solved this case in \cite{SaRzVi2012-2}. Here
we present another proof, which is suitable of 
generalization to the case of cyclic extensions of
degree $p^n$.

\begin{proposition}\label{P4.3}
Every cyclic extension $K/k$ of degree $p$ such that $P$ is the only
ramified prime, $\p$ decomposes in $K/k$ and ${\eu F}_K\mid P^{\alpha}$
is contained in $\lam \alpha$.
\end{proposition}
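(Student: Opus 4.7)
The plan is to prove inequality (\ref{EqNew2}) for $n=1$ by direct count: since every cyclic degree-$p$ subfield of $k(\Lambda_{P^{\alpha}})$ meets the hypotheses of the proposition, $v_1(\alpha)\le t_1(\alpha)$ is free, so the task is to produce the matching upper bound $t_1(\alpha)\le v_1(\alpha)=(q^{d(\alpha-\lceil\alpha/p\rceil)}-1)/(p-1)$ from Proposition \ref{P4.1}.

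Using the normal form (\ref{E2.2}) together with Proposition \ref{P2.4} (which forces the polynomial part of $\beta$ to vanish in the decomposed case), every $K$ counted by $t_1(\alpha)$ has the shape $K=k(y)$, $y^p-y=Q/P^{\lambda}$, with $p\nmid\lambda$, $\gcd(Q,P)=1$, $\deg Q<d\lambda$; the conductor condition ${\eu F}_K\mid P^{\alpha}$ becomes $1\le\lambda\le\alpha-1$. The pole order $\lambda$ is an Artin--Schreier class invariant, so I count $Q$'s separately for each admissible $\lambda$, modulo the equivalence $Q\sim cQ+P^{\lambda}\wp(z)$ generated by $c\in\ma F_p^{\ast}$ (scaling $y$) and by $z\in V_{\lambda}$, where
\[
V_{\lambda}=\Big\{\textstyle\sum_{i=1}^{m}a_i/P^i:a_i\in R_T,\ \deg a_i<d,\ 0\le m\le\lfloor\lambda/p\rfloor\Big\}.
\]
The bound $m\le\lfloor\lambda/p\rfloor$ is the maximal one for which $\wp(z)$ has pole at $P$ of order strictly less than $\lambda$, so the leading term of $\beta$ is preserved; the exclusion of a polynomial part from $z$ is dictated by the requirement that $\wp(z)$ introduce no polynomial contribution to $\beta$. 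A direct computation shows this $\ma F_p^{\ast}\times V_{\lambda}$-action is free: if $cQ+P^{\lambda}\wp(z)=Q$ then $(c-1)Q$ is divisible by $P$ (since $P^{\lambda}\wp(z)$ manifestly is), which forces $c=1$ by $\gcd(Q,P)=1$, and then injectivity of $\wp$ on $V_{\lambda}$ gives $z=0$.

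Granting the above, each $\lambda$ contributes $\Phi(P^{\lambda})/((p-1)q^{d\lfloor\lambda/p\rfloor})=q^{d(\lambda-1-\lfloor\lambda/p\rfloor)}(q^d-1)/(p-1)$ distinct extensions. Writing $\lambda=pm+r$ with $1\le r\le p-1$ and summing over the admissible pairs $(m,r)$ (the full range of $r$ for $0\le m<\lfloor(\alpha-1)/p\rfloor$ and a truncation at the top value $m=\lfloor(\alpha-1)/p\rfloor$) reduces the total to two telescoping geometric series whose combined value is $(q^{d((p-1)M+R)}-1)/(p-1)$, where $\alpha-1=pM+R$ with $0\le R<p$. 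The identity $\lceil\alpha/p\rceil=M+1$ then yields $(p-1)M+R=\alpha-\lceil\alpha/p\rceil$, so the sum collapses to exactly $v_1(\alpha)$. The main obstacle, as expected, is the first half of the argument: precisely identifying $V_{\lambda}$ as the group encoding all Artin--Schreier equivalences among normal-form representatives with the same pole order, and excluding stray equivalences arising from polynomial parts or from larger pole orders. Once this is in place the remaining geometric-series manipulation is routine, and Proposition \ref{P4.3} follows.
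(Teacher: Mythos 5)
Your proposal is correct and follows essentially the same route as the paper: Hasse normal form $y^p-y=Q/P^{\lambda}$ with $\lambda\le\alpha-1$, counting the $\Phi(P^{\lambda})$ numerators modulo the equivalence generated by $\mathbb{F}_p^{\ast}$-scaling and $\wp(c)$ with $c$ ranging over exactly the paper's set $\mathcal G$ (your $V_{\lambda}$ in partial-fraction form), freeness of that action, and the same geometric-series evaluation giving $(q^{d(\alpha-\lceil\alpha/p\rceil)}-1)/(p-1)=v_1(\alpha)$. The only cosmetic difference is that you conclude via the inequality $t_1(\alpha)\le v_1(\alpha)$ combined with the trivial reverse inequality, while the paper computes $t_1(\alpha)$ exactly; the substance is identical.
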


\proof
From the Artin--Schreier theory
(see (\ref{E2.2})) and Proposition \ref{P2.4},
we have that the field $K$ is given by
$K=k(y)$ with the Artin--Schreier equation of 
$y$ normalized as prescribed by Hasse \cite{Has35}. That is
\[
y^p-y=\frac{Q}{P^{\lambda}},
\]
where $P\in R_T^+$, $Q\in R_T$, 
$\gcd(P,Q)=1$, $\lambda >0$, $p\nmid \lambda$, $\deg Q<
\deg P^{\lambda}$. Now the conductor ${\eu F}_K$ satisfies
 ${\eu F}_K=P^{\lambda+1}$ so
$\lambda\leq \alpha-1$.

We have that if $K=k(z)$ with $z^p-z=a$ then there exist
$j\in {\ma F}_p^{\ast}$ and $c\in k$ such that 
$z=jy+c$ and $a=j\frac{Q}{P^{\lambda}}+\wp(c)$ where
$\wp (c)=c^p-c$. If $a$ is also given in normal form
then $c=\frac{h}{P^{\gamma}}$ with $p\gamma\leq \lambda$
(indeed, $p\gamma<\lambda$ since $\gcd (\lambda,p)=1$)
and $\deg h<\deg P^{\gamma}$ or $h=0$. Let $\gamma_0:=
\integerchico{\lambda}{p}$. Then any such $c$ can be
written as $c=\frac{hP^{\gamma_0-\gamma}}{P^{\gamma_0}}$.
Therefore $c\in {\mathcal G}:=\Big\{\frac{h}{P^{\gamma_0}}\mid
h\in R_T, \deg h<\deg P^{\gamma_0}=d\gamma_0
\text{\ or\ } h=0\Big\}$.

If $c\in{\mathcal G}$ and $j\in\{1,2,\ldots,p-1\}$ we have
\begin{align*}
a&=j\frac{Q}{P^{\lambda}}+\wp(c)=j\frac{Q}{P^{\lambda}}+
\frac{h^p}{P^{p\gamma_0}}+\frac{h}{P^{\gamma_0}}\\
&=\frac{jQ+P^{\lambda-p\gamma_0}h+P^{\lambda-\gamma_0}}
{P^{\lambda}}=\frac{Q_1}{P^{\lambda}},
\end{align*}
with $\deg Q_1<\deg P^{\lambda}$.
Since $\lambda-p\gamma_0>0$ and $\lambda-\gamma_0>0$,
we have $\gcd(Q_1,P)=1$. Therefore $a$ is in normal form.

It follows that the same field has $|{\ma F}_p^{\ast}||\wp({\mathcal
G})|$ different representations in standard form.
Now ${\mathcal G}$ and $\wp({\mathcal G})$ are additive
groups and $\wp\colon {\mathcal G}\to \wp({\mathcal G})$ is a 
group epimorphism with kernel $\ker \wp = {\mathcal G}\cap
\{c\mid \wp(c)=c^p-c=0\}={\mathcal G}\cap {\ma F}_p=\{0\}$.
We have $|\wp({\mathcal G})|=|{\mathcal G}|=|R_T/(P^{\gamma_0})|=
q^{d\gamma_0}$.

From the above discussion we obtain that the number of different
cyclic extensions $K/k$ of degree $p$ such that the conductor
of $K$ is ${\eu F}_K= P^{\lambda+1}$ is equal to
\begin{equation}\label{E4.8}
\frac{\Phi(P^{\lambda})}{|{\ma F}_p^{\ast}||\wp({\mathcal G})|}=
\frac{q^{d(\lambda-1)}(q^d-1)}{(p-1)q^{d\gamma_0}}=
\frac{q^{d(\lambda-\integerchico{\lambda}{p}-1)}(q^d-1)}{p-1}=
\frac{1}{p-1}\Phi\big(P^{\lambda-\integerchico{\lambda}{p}}\big).
\end{equation}

Therefore the number of different
cyclic extensions $K/k$ of degree $p$ such that the conductor
${\eu F}_K$ of $K$ is a divisor of $P^{\alpha}$ is given by
$t_1(\alpha)=\frac{w(\alpha)}{p-1}$ where
\begin{equation}\label{E3.10}
w(\alpha)=\sum_{\substack{\lambda=1\\ \gcd(\lambda,p)=1}}^{\alpha-1}
\Phi\big(P^{\lambda-\integerchico{\lambda}{p}}\big).
\end{equation}

To compute $w(\alpha)$ write $\alpha-1=pt_0+r_0$ with $t_0\geq 0$
and $0\leq r_0\leq p-1$. Now $\{\lambda\mid 1\leq \lambda \leq 
\alpha-1, \gcd(\lambda,p)=1\}={\mathcal A}\cup {\mathcal B}$ where
\begin{gather*}
{\mathcal A}
=\{pt+r\mid 0\leq t\leq t_0-1, 1\leq r\leq p-1\}
\quad \text{and}\quad {\mathcal B}=
\{pt_0+r\mid 1\leq r\leq r_0\}.
\intertext{Then}
w(\alpha)=\sum_{\lambda\in{\mathcal A}}
\Phi\big(P^{\lambda-\integerchico{\lambda}{p}}\big)+\sum_{\lambda\in
{\mathcal B}}\Phi\big(P^{\lambda-\integerchico{\lambda}{p}}\big)
\end{gather*}
where we understand that if a set, ${\mathcal A}$ or
${\mathcal B}$ is empty, the respective sum is $0$.

Then 
\begin{align}\label{E4.8'}
w(\alpha)&=\sum_{\substack{0\leq t\leq t_0-1\\ 1\leq r\leq p-1}}
q^{d(pt+r-t-1)}(q^d-1)+\sum_{r=1}^{r_0}(q^{d(pt_0+r-t_0-1})(q^d-1)\nonumber\\
&=(q^d-1)\Big(\sum_{t=0}^{t_0-1}q^{d(p-1)t}\Big)\Big(\sum_{
r=1}^{p-1}q^{d(r-1)}\Big)+
(q^d-1)q^{d(p-1)t_0}\sum_{r=1}^{r_0}q^{d(r-1)}\\
&=(q^d-1)\frac{q^{d(p-1)t_0}-1}{q^{d(p-1)}-1}\frac{q^{d(p-1)}-1}{q^d-1}
+(q^d-1)q^{d(p-1)t_0}\frac{q^{dr_0}-1}{q^d-1}\nonumber\\
&=q^{d((p-1)t_0+r_0)}-1=q^{d(pt_0+r_0-t_0)}-1=
q^{d\big(\alpha-1-\integerchico
{\alpha-1}{p}\big)}-1.\nonumber
\end{align}

Therefore, the number of different cyclic extensions $K/k$ of degree $p$
such that $P$ is the only ramified prime, ${\eu F}_K\mid P^{\alpha}$ and
$\p$ decomposes, is
\begin{equation}\label{E4.7}
t_1(\alpha)=\frac{w(\alpha)}{p-1}=\frac{q^{d(\alpha-1-\integerchico
{\alpha-1}{p})}-1}{p-1}.
\end{equation}

To finish the proof of Proposition \ref{P4.3} we need the following

\begin{lemma}\label{L4.2}
For any $\alpha\in{\ma Z}$ and $s\in {\ma N}$ we have
\l
\item $\integer{\integerchico{\alpha}{p^s}}{p}=
\integer{\integerchico{\alpha}{p}}{p^s}=\integer{\alpha}{p^{s+1}}$.

\item $\mayor{\alpha}{p^s}=\integer{\alpha-1}{p^s}+1$.
\end{list}
\end{lemma}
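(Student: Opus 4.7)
The plan is to prove both parts by direct application of the division algorithm, with a small case split in (b). Neither part requires any machinery beyond integer arithmetic, so there is no substantial conceptual obstacle---only the bookkeeping of several decompositions of the remainder.

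For part (a), I would first write $\alpha = p^{s+1} q + r$ with $0 \le r < p^{s+1}$, which immediately gives $\integer{\alpha}{p^{s+1}} = q$. To evaluate $\integer{\integerchico{\alpha}{p^s}}{p}$, decompose $r = p^s r_1 + r_0$ with $0 \le r_0 < p^s$ and $0 \le r_1 < p$; this yields $\integer{\alpha}{p^s} = p q + r_1$, and dividing once more by $p$ gives $q$. To evaluate $\integer{\integerchico{\alpha}{p}}{p^s}$, decompose the same $r$ instead as $r = p r_1' + r_0'$ with $0 \le r_0' < p$ and $0 \le r_1' < p^s$; this gives $\integer{\alpha}{p} = p^s q + r_1'$, and dividing by $p^s$ again yields $q$. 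Both middle quantities therefore equal the right-hand one.

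For part (b), apply the division algorithm to $\alpha - 1$ rather than to $\alpha$: write $\alpha - 1 = p^s q + r$ with $0 \le r < p^s$, so that $\integer{\alpha-1}{p^s} + 1 = q + 1$. Then $\alpha = p^s q + (r+1)$ with $1 \le r + 1 \le p^s$. If $r + 1 < p^s$, then $\alpha/p^s$ is not an integer, lies strictly between $q$ and $q + 1$, and so $\mayor{\alpha}{p^s} = q + 1$; if instead $r + 1 = p^s$, then $\alpha = p^s (q+1)$ is a multiple of $p^s$, and again $\mayor{\alpha}{p^s} = q + 1$. Both sides agree in either case.

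Both arguments go through for any $\alpha \in \ma Z$, positive or not, since the division algorithm with remainder in $[0, n)$ is valid for arbitrary integer dividend and positive divisor $n$. The only point requiring care is the asymmetry between the two decompositions of $r$ used in part (a); once each is written out, the computation is mechanical, so there is no genuine obstacle.
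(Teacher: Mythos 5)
Your proof is correct and follows essentially the same route as the paper: both parts are handled by the division algorithm, with the same nested decomposition of the remainder in (a) (you spell out the second equality that the paper dismisses as ``similar'') and with a case split in (b) that is just the mirror image of the paper's split on whether $p^s$ divides $\alpha$.
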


\proof
For (a), we prove only $\integer{\integerchico{\alpha}{p^s}}{p}
=\integer{\alpha}{p^{s+1}}$, the other equality is similar.
Note that the case $s=0$ is clear. Set $\alpha=tp^{s+1}+r$
with $0\leq r\leq p^{s+1}-1$. Let $r=lp^s+r'$ with $0\leq r'\leq p^s-1$. Note
that $0\leq l\leq p-1$. Hence $\alpha=tp^{s+1}+lp^s+r'$, $0\leq r'\leq p^s-1$
and $0\leq l\leq p-1$. Therefore $\integer{\alpha}{p^s}=tp+l$, and
$\d\frac{\integer{\alpha}{p^s}}{p}=t+\frac{l}{p}$, $0\leq l\leq p-1$. Therefore
$\integer{\integerchico{\alpha}{p^s}}{p}=t=\integer{\alpha}{p^{s+1}}$.

For (b) write $\alpha=p^s t+r$ with
$0\leq r\leq p^s-1$. If $p^s\mid \alpha$ then $r=0$ and $\mayor{\alpha}{p^s}
=t$, $\integer{\alpha-1}{p^s}=\integer{p^s t-1}{p^s}=\d\Big[t-\frac{1}{p^s}\Big]=
t-1=\mayor{\alpha}{p^s}-1$.

If $ p^s\nmid \alpha$, then $1\leq r\leq p^s-1$ and $\alpha-1=p^st+(r-1)$
with $0\leq r-1\leq p^s-2$. Thus $\mayor{\alpha}{p^s}=\d\Big\lceil t+\frac{r}{p^s}
\Big\rceil=t+1$ and $\integer{\alpha-1}{p^s}=\d\Big[t+\frac{r-1}{p^s}\Big]=t=
\mayor{\alpha}{p^s}-1$. This finishes the proof of Lemma \ref{L4.2}. \qed

\s

From Lemma \ref{L4.2} (b) we obtain that (\ref{E4.7}) is equal to
\begin{equation}\label{E3.11}
t_1(\alpha)=\frac{w(\alpha)}{p-1}=\frac{q^{d\big(\alpha-1-\big(\mayorchico
{\alpha}{p}-1\big)\big)}-1}{p-1}=\frac{q^{d\big(\alpha-\mayorchico
{\alpha}{p}\big)}-1}{p-1}=v_1(\alpha).
\end{equation}

As a consequence of (\ref{E3.11}), we have 
Proposition \ref{P4.3}.\qed

\s

Proposition \ref{P4.3} proves (\ref{EqNew2}) for $n=1$ and all
$\alpha\in{\ma N}$.

Now consider any cyclic extension $K_n/k$ of degree $p^n$ such that
$P$ is the only ramified prime, it is fully ramified, $\p$
decomposes fully in $K_n/k$ and
${\eu F}_K\mid P^{\alpha}$. We want to prove that $K_n\subseteq
\lam \alpha$, that is, (\ref{EqNew2}): $t_n(\alpha)\leq
v_n(\alpha)$. This will be proved by induction on $n$. The case
$n=1$ is Proposition \ref{P4.3}. We assume that any cyclic extension
$K_{n-1}$ of degree $p^{n-1}$, $n\geq 2$ such that $P$ is the only ramified
prime, $\p$ decomposes fully in $K_{n-1}/k$ and ${\eu F}_{K_{
n-1}}\mid P^{\delta}$
is contained in $\lam \delta$ where $\delta\in{\ma N}$.

Let $K_n$ be any cyclic extension of degree $p^n$ such that
$P$ is the only ramified prime and it is fully ramified,
 $\p$ decomposes fully in $K_n/k$
and ${\eu F}_{K_n}\mid P^{\alpha}$.
Let $K_{n-1}$ be the subfield of $K_n$ of degree $p^{n-1}$
over $k$.
Now we consider $K_n/k$ generated by the Witt vector $\vec{\beta}=
(\beta_1,\ldots,\beta_{n-1},\beta_n)$, that is, $\wp(\vec{y})=
\vec{y}^p\Witt - \vec{y}=\vec{\beta}$,
and we assume that $\vec{\beta}$ is in the normal form 
described by Schmid (see Theorem \ref{T2.3}, \cite{Sch36}).
Then $K_{n-1}/k$ is
given by the Witt vector $\vec{\beta'}=(\beta_1,\ldots,\beta_{n-1})$.

If $\vec{\lambda}:=(\lambda_1,\ldots,\lambda_{n-1},\lambda_n)$ is
the vector of Schmid's parameters, that is, each $\beta_i$
is given by
\begin{gather*}
\beta_i=\frac{Q_i}{P^{\lambda_i}}, \text{\ where\ } Q_i=0 \text{\ 
(that is, $\beta_i=0$) and $\lambda_i=0$ or}\\
 \gcd(Q_i,P)=1, \deg Q_i<\deg P^{\lambda_i}, \lambda_i>0 \text{\ and\ }
\gcd(\lambda_i,p)=1.
\end{gather*}
Since $P$ is fully ramified we have $\lambda_1>0$. 

Now we compute how many different extensions
$K_n/K_{n-1}$ can be constructed
by means of $\beta_n$. 

\begin{lemma}\label{LN1}
For a fixed $K_{n-1}$ the number of different fields $K_n$
is less than or equal to
\begin{equation}\label{EqNew5}
\frac{1+w(\alpha)}{p}=
\frac{1}{p}q^{d(\alpha-\mayorchico {\alpha}p)}.
\end{equation}
\end{lemma}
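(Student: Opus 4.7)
The plan is to fix a Schmid normal form $(\beta_1,\ldots,\beta_{n-1})$ of $K_{n-1}$ and parametrize the fields $K_n\supseteq K_{n-1}$ of the lemma by their ``new'' component $\beta_n$ in a suitable ambient ${\ma F}_q$--vector space, then count the orbits of the natural Witt-vector equivalence on that space. By Theorem~\ref{T2.3}, Proposition~\ref{P2.5'} and the conductor bound $\max_i p^{n-i}\lambda_i\leq\alpha-1$, every such $K_n$ is represented by a Witt vector $(\beta_1,\ldots,\beta_{n-1},\beta_n)$ with $\beta_n$ in normal form {\rm(\ref{EqNew1})} and $\lambda_n\leq\alpha-1$; in particular $\beta_n$ lies in the ${\ma F}_q$--vector space
\[
{\mathcal B}':=\bigl\{\beta\in k : \beta\text{ has only pole at }P,\text{ of order}\leq\alpha-1,\ v_{\eu p}(\beta)\geq 1\bigr\}
\]
of cardinality $q^{d(\alpha-1)}$, where the condition $v_{\eu p}(\beta)\geq 1$ encodes the full decomposition of $\p$ by Propositions~\ref{P2.4}--\ref{P2.5}.

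Next I determine the equivalence on $\beta_n$. Two Witt vectors $(\beta_1,\ldots,\beta_{n-1},\beta_n)$ and $(\beta_1,\ldots,\beta_{n-1},\beta_n^{\ast})$ define the same $K_n$ iff $\vec\beta^{\ast}=j\vec\beta\Witt+\wp(\vec c)$ for some $j\in({\ma Z}/p^n)^{\ast}$ and $\vec c\in W_n(k)$ whose $\wp$--image preserves the first $n-1$ slots. Checking slot-by-slot via Witt addition, the first slot forces $j\equiv 1\pmod p$ and $c_1\in{\ma F}_p$; inductively, if $c_1,\ldots,c_m\in{\ma F}_p$ and $j\equiv 1\pmod{p^m}$, then writing $j=1+kp^m$ and using $p^m\vec\beta=V^mF^m(\vec\beta)$ the slot-$(m+1)$ equation reduces to $\wp(c_{m+1})=-k\beta_1^{p^m}$. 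Since $\beta_1^{p^m}\equiv\beta_1\pmod{\wp(k)}$ by the telescoping $\beta_1^{p^i}-\beta_1^{p^{i-1}}=\wp(\beta_1^{p^{i-1}})$ and $\beta_1\notin\wp(k)$, this forces $k\equiv 0\pmod p$, hence $j\equiv 1\pmod{p^{m+1}}$. Iteration shows only $j=1+kp^{n-1}$ with $k\in{\ma F}_p$ is admissible, and the induced map on $\beta_n$ is
\[
\beta_n\longmapsto\beta_n+k\beta_1^{p^{n-1}}+\wp(c_n),\qquad k\in{\ma F}_p,\ c_n\in k.
\]
Using $\beta_1^{p^{n-1}}\equiv\beta_1\pmod{\wp(k)}$ once more, the equivalence on $\beta_n$ is translation by the ${\ma F}_p$--subspace $H:=\wp(k)+{\ma F}_p\beta_1\subseteq k$, which equals $k\cap\wp(K_{n-1})$ via $H^1(\Gal(K_{n-1}/k),{\ma F}_p)\cong{\ma F}_p$ generated by $[\beta_1]=[\wp(y_1)]$.

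The bound then reduces to counting $H$--orbits on ${\mathcal B}'$. For $\wp(c)\in{\mathcal B}'$ one needs $c=h/P^{\nu_0}$ with $\nu_0:=\integerchico{\alpha-1}{p}$ and $\deg h\leq d\nu_0$, together with the condition that the constant term of $c$ at $\p$ lie in ${\ma F}_p$ (so that $v_{\eu p}(\wp(c))\geq 1$); a short enumeration of the leading coefficient of $h$ gives $pq^{d\nu_0}$ such $c$'s, and hence $q^{d\nu_0}$ distinct images of $\wp$ in ${\mathcal B}'$. Since $\beta_1\in{\mathcal B}'\setminus\wp(k)$, adjoining ${\ma F}_p\beta_1$ yields $|H\cap{\mathcal B}'|=pq^{d\nu_0}$, so the number of $H$--orbits on ${\mathcal B}'$ is
\[
\frac{|{\mathcal B}'|}{|H\cap{\mathcal B}'|}=\frac{q^{d(\alpha-1)}}{pq^{d\nu_0}}=\frac{q^{d(\alpha-1-\nu_0)}}{p}=\frac{q^{d(\alpha-\mayorchico{\alpha}{p})}}{p}=\frac{1+w(\alpha)}{p},
\]
using Lemma~\ref{L4.2}(b) for the penultimate equality. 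Since every admissible $K_n$ contributes at least one such orbit via its Schmid representative $\beta_n\in{\mathcal B}\subseteq{\mathcal B}'$, the bound $\#\{K_n\}\leq(1+w(\alpha))/p$ follows.

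The main obstacle is Step~2, the Witt-vector analysis that pins the admissible $j$'s down to exactly $\{1+kp^{n-1}:k\in{\ma F}_p\}$ via successive use of $\beta_1\notin\wp(k)$ together with the telescoping $\beta_1^{p^j}\equiv\beta_1\pmod{\wp(k)}$. The structural reason for the factor $1/p$ (rather than the $1/(p-1)$ of Proposition~\ref{P4.3}) is that the $\p$--decomposition constraint $v_{\eu p}(\beta_n)\geq 1$ cuts ${\mathcal B}'$ by exactly one power of $q$ relative to the unrestricted pole-at-$P$ space, while the non-trivial $j$--action contributes the extra generator $\beta_1\in H$.
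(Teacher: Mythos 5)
Your argument is correct, and it reaches the bound by a different bookkeeping than the paper's. The paper stays inside the Hasse--Schmid normal forms: it counts the classes $\overline{\beta_n}$ of normal-form last components (there are $1+w(\alpha)=q^{d(\alpha-\mayorchico{\alpha}{p})}$ of them) and then shows, via the substitution $y_n\mapsto y_n+jy_1$ together with $\beta_1\notin\wp(k)$, that each field $K_n$ is represented by at least $p$ distinct such classes. You instead work in the ambient ${\ma F}_q$--space ${\mathcal B}'$ of all candidates with pole only at $P$ of order at most $\alpha-1$ and positive value at $\p$, identify the relevant equivalence as translation by $H=\wp(k)+{\ma F}_p\beta_1$, and obtain the bound as a single index computation $|{\mathcal B}'|/|{\mathcal B}'\cap H|=q^{d(\alpha-1)}/\bigl(pq^{d\integerchico{\alpha-1}{p}}\bigr)$, with Lemma \ref{L4.2}(b) converting the exponent to $d\bigl(\alpha-\mayorchico{\alpha}{p}\bigr)$. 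The essential ingredients coincide: $\wp$--translations and ${\ma F}_p\beta_1$--translations of $\beta_n$ do not change the field, $\beta_1\notin\wp(k)$ produces the factor $p$, and Schmid's conductor formula gives $\lambda_n\leq\alpha-1$; what your packaging buys is that the explicit enumeration $w(\alpha)=\sum\Phi\bigl(P^{\lambda_n-\integerchico{\lambda_n}{p}}\bigr)$ and the separate ``repeated at least $p$ times'' argument collapse into one subgroup-index count, and your computation of $\wp(k)\cap{\mathcal B}'$ is exactly the paper's count of $\wp({\mathcal G})$. Note also that your Step 2 (the slot-by-slot Witt analysis forcing $j\equiv 1\bmod p^{n-1}$) establishes the ``only if'' direction of the equivalence, which is more than the upper bound requires: for $t_n(\alpha)\leq$ (number of classes) one only needs that translating $\beta_n$ by $H$ preserves the field, which the paper gets at once from $y_n\mapsto y_n+c$ and $y_n\mapsto y_n+jy_1$; keeping that extra analysis is harmless (and in fact shows the equivalence classes are precisely the $H$--cosets), but if you retain it you should spell out the Witt-coordinate facts you invoke (that adding a vector with vanishing first $m$ slots adds componentwise in slot $m+1$, and that slot $m+1$ of $\wp(\vec c)$ is $\wp(c_{m+1})$ once $c_1,\ldots,c_m\in{\ma F}_p$), since they carry the induction.
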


\proof
For $\beta_n\neq 0$, each equation in normal
form is given by
\begin{gather}\label{E4.11}
y_n^p-y_n=z_{n-1}+\beta_n,
\end{gather}
where $z_{n-1}$ is the element in $K_{n-1}$ obtained by
the Witt generation of $K_{n-1}$ by the vector $\vec{\beta'}$ (see
\cite[page 161]{Sch36}). In fact 
$z_{n-1}$ is given, formally, by
\[
z_{n-1}=\sum_{i=1}^{n-1}\frac{1}{p^{n-i}}\big[
y_i^{p^{n-i}}+\beta_i^{p^{n-i}}-(y_i+\beta_i+
z_{i-1})^{p^{n-i}}\big],
\]
with $z_0=0$.

As in the case $n=1$ we have that there
exist $\Phi(P^{\lambda_n})$
different $\beta_n$ with $\lambda_n>0$.
With the change of variable
$y_n\to y_n+c$, $c\in {\mathcal G}_{\lambda_n}:
=\big\{\frac{h}{P^{\gamma_n}}\mid
h\in R_T, \deg h<\deg P^{\gamma_n}=d\gamma_n
\text{\ or\ } h=0\big\}$
where $\gamma_n=\integer{\lambda_n}{p}$, we obtain
$\beta_n\to \beta_n+\wp(c)$ also in normal form. Therefore
the number of different
elements $\beta_n$ which provide the same field $K_n$ with this
change of variable is $
q^{d(\integerchico{\lambda_n}{p})}$. Therefore we obtain at most
$\Phi\big(P^{\lambda_n-\integerchico{\lambda_n}{p}}\big)$ possible
fields $K_n$ for each $\lambda_n>0$ (see (\ref{E4.8})).
More precisely, if for each $\beta_n$ with $\lambda_n>0$ we set
$\overline{\beta_n}:=\{\beta_n+\wp(c)\mid c\in {\mathcal G}_{\lambda_n}\}$,
then any element of $\overline{\beta_n}$ gives the same field $K_n$.

Let $v_P$ denote the valuation at $P$ and
\begin{gather*}
{\mathcal A}_{\lambda_n}:=\{\overline{\beta_n}\mid v_P(\beta_n)=-
\lambda_n\},\\
%\intertext{and}
{\mathcal A}:=\bigcup_{\substack{\lambda_n=1\\
\gcd(\lambda_n,p)=1}}^{\alpha-1} {\mathcal A}_{\lambda_n}.
\end{gather*}

Then any field $K_n$ is given by $\beta_n=0$ or $\overline{\beta_n}\in
{\mathcal A}$.  From (\ref{E4.8'})
we have that the number of 
fields $K_n$ containing a fixed $K_{n-1}$ 
that we obtain in (\ref{E4.11}) is less than or equal to
\begin{gather}\label{E410'}
1+|{\mathcal A}|=1+w(\alpha)=q^{d\big(\alpha-1-\integerchico
{\alpha-1}{p}\big)}=q^{d\big(\alpha-1-\mayorchico{\alpha}{p}+1\big)}=
q^{d\big(\alpha-\mayorchico{\alpha}{p}\big)}.
\end{gather}

Now with the substitution $y_n\to y_n+jy_1$, $j=0,1,\ldots,p-1$,
in (\ref{E4.11}) we obtain
\[
(y_n+jy_1)^p-(y_n+jy_1)=y_n^p-y_n +j(y_1^p-y_1)=
z_{n-1}+\beta_n+j\beta_1.
\]

Therefore each of the extensions obtained in (\ref{E4.11}) is repeated
at least $p$ times, that is, for each $\beta_n$, we obtain the same extension
with $\beta_n,\beta_n+\beta_1,\ldots, \beta_n+(p-1)\beta_1$. 
We will prove that different $\beta_n+j\beta_1$ correspond to 
different elements of $\{0\}\cup {\mathcal A}$.

Fix $\beta_n$. We modify each $\beta_n+j\beta_1$ into its normal
form: $\beta_n+j\beta_1 +\wp(c_{\beta_n,j})$ for some
$c_{\beta_n,j}\in k$. Indeed $\beta_n+j\beta_1$ is
always in normal form with the possible exception that $\lambda_n=
\lambda_1$ and in this case it holds for at most one index $j\in
\{0,1,\ldots,p-1\}$: if $\lambda_n\neq \lambda_1$, 
\[
v_P(\beta_n+j\beta_1)=
\begin{cases}
-\lambda_n&\text{if $j=0$}\\
-\max\{-\lambda_n,-\lambda_1\}&\text{if $j\neq 0$}
\end{cases}.
\]
When $\lambda_n= \lambda_1$ and if $v_P(\lambda_n+j
\lambda_1)=u>-\lambda_n=-\lambda_1$ and $p|u$,
then for $i\neq j$, $v_P(\beta_n+i\beta_i)=v_P(
\beta_n+j\beta_1+(i-j)\beta_1)=-\lambda_n=-\lambda_1$.
In other words $c_{\beta_n,j}=0$ with very few exceptions.

Each $\mu=\beta_n+j\beta_1+\wp(c_{\beta_n,j})$, $j=0,1,\ldots,p-1$
satisfies that either $\mu=0$ or $\overline{\mu}\in{\mathcal A}$. We will
see that all these elements give different elements of $\{0\}\cup
{\mathcal A}$.

If $\beta_n=0$, then for $j\neq 0$, $v_P(j\beta_1)=-\lambda_1$, so
$\overline{j\beta_n}\in{\mathcal A}$. Now if
$\overline{j\beta_n}=\overline{i\beta_n}$,
then 
\[
j\beta_1=\beta_n^{\prime}+\wp(c_1)\quad\text{and}\quad
i\beta_1=\beta_n^{\prime}+\wp(c_2)
\]
for some $\beta_n^{\prime}\neq 0$ and some
$c_1,c_2\in{\mathcal G}_{\lambda_1}$. It follows that
$(j-i)\beta_1=\wp(c_2-c_1)\in\wp(k)$. This is not possible by the choice
of $\beta_1$ unless $j=i$. 

Let $\beta_n\neq 0$. The case $\beta_n+j\beta_1=0$
for some $j\in\{0,1,\ldots,p-1\}$ has already been considered
in the first case. Thus we consider the case $\beta_n+j\beta_1
+\wp(c_{\beta_n,j})\neq 0$ for all $j$. If for some $i,j\in\{0,1,\ldots,p-1\}$
we have $\overline{\beta_n+j\beta_1+\wp(c_{\beta_n,j})}=\overline{
\beta_n+i\beta_1+\wp(c_{\beta_n,i})}$ then there exists $\beta_n^{\prime}$
and $c_1,c_2\in k$ such that 
\[
\beta_n+j\beta_1+\wp(c_{\beta_n,j})=\beta_n^{\prime}+\wp(c_1)\quad
\text{and}\quad
\beta_n+i\beta_1+\wp(c_{\beta_n,i})=\beta_n^{\prime}+\wp(c_2).
\]
It follows that $(j-i)\beta_1=\wp(c_1-c_2+
c_{\beta_n,i}-c_{\beta_n,j})\in\wp(k)$ so that
$i=j$.

Therefore each field $K_n$ is represented by at least $p$
different elements of $\{0\}\cup {\mathcal A}$. The result follows.
\qed

\s

Now, according to Schmid \cite[page 163]{Sch36}, the conductor of $K_n$
is $P^{M_n+1}$ where $M_n=\max\{pM_{n-1},\lambda_n\}$ and 
$P^{M_{n-1}+1}$ is the conductor of $K_{n-1}$. Since ${\eu F}_{K_n}\mid
P^{\alpha}$, we have $M_n\leq \alpha-1$. Therefore
$pM_{n-1}\leq \alpha-1$ and $\lambda_n\leq \alpha-1$. Hence
${\eu F}_{K_{n-1}}\mid P^{\delta}$ with $\delta=\integer{\alpha-1}{p}+1$.

\begin{proposition}\label{P4.4}
We have
\[
\frac{v_n(\alpha)}{v_{n-1}(\delta)}=
\frac{q^{d\big(\alpha-\mayorchico{\alpha}{p}\big)}}{p},
\]
where $\delta=\integer{\alpha-1}{p}+1$.
\end{proposition}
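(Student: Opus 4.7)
The plan is to compute both $v_n(\alpha)$ and $v_{n-1}(\delta)$ directly from the closed-form formula of Proposition \ref{P4.1}, then cancel a common factor after rewriting the ceilings of $\delta$ in terms of ceilings of $\alpha$. No new ingredient beyond Proposition \ref{P4.1} and Lemma \ref{L4.2} is needed.

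First, I would use Lemma \ref{L4.2}(b) to recognize that $\delta = \integer{\alpha-1}{p}+1 = \mayor{\alpha}{p}$. The crucial combinatorial identity to establish is
\[
\mayor{\delta}{p^{j}} = \mayor{\alpha}{p^{j+1}} \quad \text{for every } j \geq 0.
\]
This follows by writing $\mayor{\delta}{p^{j}} = \integer{\delta-1}{p^{j}}+1 = \integer{\integerchico{\alpha-1}{p}}{p^{j}}+1$ via Lemma \ref{L4.2}(b), then applying Lemma \ref{L4.2}(a) to collapse the nested floors into $\integer{\alpha-1}{p^{j+1}}+1$, and finally invoking Lemma \ref{L4.2}(b) once more to return to $\mayor{\alpha}{p^{j+1}}$. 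Specializing this to $j=n-1$ and $j=n-2$ gives $\mayor{\delta}{p^{n-1}} = \mayor{\alpha}{p^{n}}$ and $\mayor{\delta}{p^{n-2}} = \mayor{\alpha}{p^{n-1}}$.

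Next, I would substitute both numerator and denominator into the ratio using the second (factored) form of $v_n(\alpha)$ given in Proposition \ref{P4.1}. Explicitly,
\[
v_n(\alpha) = \frac{q^{d(\alpha-\mayorchico{\alpha}{p^{n-1}})}\bigl(q^{d(\mayorchico{\alpha}{p^{n-1}}-\mayorchico{\alpha}{p^{n}})}-1\bigr)}{p^{n-1}(p-1)},
\]
and by the identity above,
\[
v_{n-1}(\delta) = \frac{q^{d(\delta-\mayorchico{\alpha}{p^{n-1}})}\bigl(q^{d(\mayorchico{\alpha}{p^{n-1}}-\mayorchico{\alpha}{p^{n}})}-1\bigr)}{p^{n-2}(p-1)}.
\]
The two parenthesized factors $(q^{d(\cdots)}-1)$ are identical and cancel, and the $(p-1)$ factors cancel as well, leaving
\[
\frac{v_n(\alpha)}{v_{n-1}(\delta)} = \frac{1}{p}\, q^{d(\alpha-\delta)} = \frac{q^{d(\alpha-\mayorchico{\alpha}{p})}}{p},
\]
which is exactly the claim.

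The only real obstacle is the bookkeeping for the ceilings, i.e.\ verifying $\mayor{\delta}{p^{j}} = \mayor{\alpha}{p^{j+1}}$. Once Lemma \ref{L4.2} is used as the bridge between floor and ceiling and between iterated floors, everything collapses to a one-line cancellation. In particular, the appearance of $\delta=\mayor{\alpha}{p}$ in the definition of the conductor bound, dictated by $p M_{n-1}\leq \alpha-1$, is precisely what is needed to make the ceilings align across the two levels of the induction.
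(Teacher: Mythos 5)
Your proposal is correct and follows essentially the same route as the paper: both compute the ratio from the closed formula of Proposition \ref{P4.1} and use Lemma \ref{L4.2} to align the ceilings of $\delta$ with those of $\alpha$ before cancelling the common factor $\bigl(q^{d(\mayorchico{\alpha}{p^{n-1}}-\mayorchico{\alpha}{p^{n}})}-1\bigr)$. Your packaging of the bookkeeping into the single identity $\mayor{\delta}{p^{j}}=\mayor{\alpha}{p^{j+1}}$ (with $\delta=\mayor{\alpha}{p}$) is a slightly tidier way of organizing exactly the floor--ceiling manipulations the paper carries out term by term.
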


\proof
From Proposition \ref{P4.1} we obtain
\begin{align*}
v_n(\alpha)&=\frac{q^{d\big(\alpha-\mayorchico{\alpha}
{p^{n-1}}\big)}\big(q^{d\big(\mayorchico{\alpha}{p^{n-1}}
-\mayorchico{\alpha}{p^{n}}\big)}-1\big)}{p^{n-1}(p-1)}\\
&=\frac{q^{d\big(\alpha-\mayorchico{\alpha}
{p^{n-1}}\big)}}{p^{n-1}(p-1)}\big(q^{d\big(\mayorchico{\alpha}{p^{n-1}}
-\mayorchico{\alpha}{p^{n}}\big)}-1\big),\\
\intertext{and}
v_{n-1}(\delta)&=\frac{q^{d\big(\delta-\mayorchico{\delta}
{p^{n-2}}\big)}\big(q^{d\big(\mayorchico{\delta}{p^{n-2}}
-\mayorchico{\delta}{p^{n-1}}\big)}-1\big)}{p^{n-2}(p-1)}\\
&=\frac{q^{d\big(\delta-\mayorchico{\delta}
{p^{n-2}}\big)}}{p^{n-2}(p-1)}\big(q^{d\big(\mayorchico{\delta}{p^{n-2}}
-\mayorchico{\delta}{p^{n-1}}\big)}-1\big).
\end{align*}

From Lemma \ref{L4.2} we have
\begin{align*}
\mayor{\delta}{p^{n-2}}-\mayor{\delta}{p^{n-1}}&=\Big(
\integer{\delta-1}{p^{n-2}}+1\Big)-\Big(\integer{\delta-1}{p^{n-1}}+1\Big)\\
&=\integer{\delta-1}{p^{n-2}}-\integer{\delta-1}{p^{n-1}}=
\integer{\integerchico{\alpha-1}{p}}{p^{n-2}}-
\integer{\integerchico{\alpha-1}{p}}{p^{n-1}}\\
&=\integer{\alpha-1}{p^{n-1}}-\integer{\alpha-1}{p^n}=
\Big(\mayor{\alpha}{p^{n-1}}-1\Big)-\Big(\mayor{\alpha}{p^{n}}-1\Big)\\
&=\mayor{\alpha}{p^{n-1}}-\mayor{\alpha}{p^n},\\
\delta-\mayor{\delta}{p^{n-2}}&=\Big(\integer{\alpha-1}{p}+1\Big)
-\Big(\integer{\delta-1}{p^{n-2}}+1\Big)\\
&=\integer{\alpha-1}{p}-\integer{\delta-1}{p^{n-2}}=\integer{\alpha-1}{p}
-\integer{\integerchico{\alpha-1}{p}}{p^{n-2}}\\
&=\integer{\alpha-1}{p}-\integer{\alpha-1}{p^{n-1}}.
\end{align*}

Therefore
\begin{gather*}
v_{n-1}(\delta)=\frac{q^{d\big(\integerchico{\alpha-1}{p}-\integerchico{
\alpha-1}{p^{n-1}}\big)}}{p^{n-2}(p-1)}\Big(q^{d\big(\mayorchico{\alpha}{p^{n-1}}
-\mayorchico{\alpha}{p^n}\big)}-1\Big).
\end{gather*}
Thus, again by Lemma \ref{L4.2}
\begin{align*}
\frac{v_n(\alpha)}{v_{n-1}(\delta)}&=
\frac{\frac{q^{d\big(\alpha-\mayorchico{\alpha}
{p^{n-1}}\big)}}{p^{n-1}(p-1)}\big(q^{d\big(\mayorchico{\alpha}{p^{n-1}}
-\mayorchico{\alpha}{p^{n}}\big)}-1\big)}
{\frac{q^{d\big(\integerchico{\alpha-1}{p}-\integerchico{
\alpha-1}{p^{n-1}}\big)}}{p^{n-2}(p-1\big)}\Big(q^{d\big(\mayorchico{\alpha}{p^{n-1}}
-\mayorchico{\alpha}{p^n}\big)}-1\Big)}\\
&=\frac{1}{p}q^{d\big(\alpha-\mayorchico{\alpha}{p^{n-1}}-\integerchico{
\alpha-1}{p}+\integerchico{\alpha-1}{p^{n-1}}\big)}\\
&=\frac{1}{p}q^{d\big(\alpha-\mayorchico{\alpha}{p^{n-1}}-\big(\mayorchico{
\alpha}{p}-1\big)+\big(\mayorchico{\alpha}{p^{n-1}}-1\big)\big)}
=\frac{1}{p}q^{d\big(\alpha-\mayorchico{\alpha}{p}\big)}.
\end{align*}

This proves the result. \qed

\s

Hence, from Proposition \ref{P4.4}, Lemma
\ref{LN1} (\ref{EqNew5}) and since 
by the induction hypothesis,
$t_{n-1}(\delta)=v_{n-1}(\delta)$,
we obtain
\[
t_n(\alpha)\leq t_{n-1}(\delta)\big(\frac{1}{p}q^{d\big(\alpha-\mayorchico
{\alpha}p\big)}\big)=v_{n-1}(\delta)\big(\frac{1}{p}q^{d\big(\alpha-\mayorchico
{\alpha}p\big)}\big)=v_n(\alpha).
\]
This proves (\ref{EqNew2}) and Theorem \ref{T2.1}.

\section{Alternative proof of (\ref{EqNew2})}\label{S6}

We keep the same notation as in previous sections. Let $K/k$
be an extension satisfying the conditions (\ref{EqNew1})
and with conductor a divisor of $P^{\alpha}$.
We have ${\eu F}_K=P^{M_n+1}$ where 
\begin{gather*}
M_n=\max\{p^{n-1}\lambda_1,p^{n-2}\lambda_2,\ldots,
p\lambda_{n-1},\lambda_n\},\\
\intertext{see \cite{Sch36}. Therefore}
{\eu F}_K\mid P^{\alpha}\iff M_n+1\leq \alpha \iff
p^{n-i}\lambda_i\leq \alpha-1, \quad i=1,\ldots, n.
\end{gather*}

Thus $\lambda_i\leq \integer{\alpha-1}{p^{n-i}}$. These
conditions give all cyclic extensions of degree $p^n$
where $P\in R_T^+$ is the only ramified prime, it is fully
ramified, $\p$ decomposes fully and its conductor divides
$P^{\alpha}$. Now we estimate the number of different
forms of generating $K$.

Let $K=k(\vec y)$. First, note that with the
change of variable $y_i$ for $y_i+c_i$
for each $i$, $c_i\in k$ we obtain the same extension.
For these new ways of 
generating $K$ to satisfy (\ref{EqNew1}), we
must have:
\l
\item If $\lambda_i=0$, $c_i=0$.
\item If $\lambda_i>0$, then $c_i
\in \Big\{\frac{h}{P^{\gamma_i}}\mid
h\in R_T, \deg h<\deg P^{\gamma_i}=d\gamma_i
\text{\ or\ } h=0\Big\}$,
where $\gamma_i=\integer{\lambda_i}{p}$.
Therefore we have at most 
$\Phi\big(P^{\lambda_i-\integerchico{\lambda_i}{p}}\big)$ 
extensions for this $\lambda_i$ 
(see (\ref{E4.8})). Since $1\leq \lambda_i
\leq\integer{\alpha-1}{p^{n-i}}$ and $\gcd(\lambda_i,p)=1$,
if we let $\delta_i:=\integer{\alpha-1}{p^{n-i}}+1$, 
from (\ref{E3.10}) and (\ref{E4.8'}) we obtain that 
we have at most
\begin{gather}\label{EqNew3}
w(\delta_i)=\sum_{\substack{\lambda_i=1\\ 
\gcd(\lambda_i,p)=1}}^{\delta_i-1}
\Phi\big(P^{\lambda_i-\integerchico{\lambda_i}{p}}\big)=
q^{d\big(\delta_i-1-\integerchico{\delta_i-1}{p}\big)}-1
\end{gather}
different expressions for all possible $\lambda_i>0$.

Now by Lemma \ref{L4.2} we have
\[
\delta_i-1-\integer{\delta_i-1}{p}=\integer{\alpha-1}{p^{n-i}}-
\integer{\integer{\alpha-1}{p^{n-i}}}{p}=\integer{\alpha-1}
{p^{n-i}}-\integer{\alpha-1}{p^{n-i+1}}.
\]
Therefore
\begin{gather}\label{EqNew4}
w(\delta_i)=q^{d\big(\integerchico{\alpha-1}{p^{n-i}}-\integerchico{
\alpha-1}{p^{n-i+1}}\big)}-1.
\end{gather}
\end{list}

When $\lambda_i=0$ is allowed, we have at most $w(\delta_i)+1$
extensions with parameter $\lambda_i$. Therefore, since $\lambda_1
>0$ and $\lambda_i\geq 0$ for $i=2,\ldots, n$, we have that 
the number of extensions satisfying (\ref{EqNew1}) and with
conductor a divisor of $P^{\alpha}$ is at most
\begin{gather*}
s_n(\alpha):=w(\delta_1)\cdot \prod_{i=2}^n\big(w(\delta_i)+1\big).\\
\intertext{From (\ref{EqNew3}) and (\ref{EqNew4}), we obtain}
s_n(\alpha)=\Big(q^{d\big(\integerchico{\alpha-1}{p^{n-1}}
-\integerchico{\alpha-1}{p^n}\big)}-1\Big)\cdot \prod_{i=2}^n
q^{d\big(\integerchico{\alpha-1}{p^{n-i}}
-\integerchico{\alpha-1}{p^{n-i+1}}\big)}.\\
\intertext{Therefore $\prod_{i=2}^n(w(\delta_i)+1)=q^{d\mu}$ where}
\begin{align*}
\mu &= \sum_{i=2}^n \Big(\integer{\alpha-1}{p^{n-i}}
-\integer{\alpha-1}{p^{n-i+1}}\Big)=
\sum_{i=2}^n \integer{\alpha-1}{p^{n-i}}
-\sum_{j=1}^{n-1}\integer{\alpha-1}{p^{n-j}}\\
&=\integer{\alpha-1}{p^{n-n}}-\integer{\alpha-1}{p^{n-1}}=
\alpha-1-\integer{\alpha-1}{p^{n-1}}.
\end{align*}
\end{gather*}

Hence
\begin{align*}
s_n(\alpha)&=\Big(q^{d\big(\integerchico{\alpha-1}{p^{n-1}}
-\integerchico{\alpha-1}{p^n}\big)}-1\Big)\cdot q^{d\big(\alpha
-1-\integerchico{\alpha-1}{p^{n-1}}\big)}\\
&= q^{d\big(\integerchico{\alpha-1}{p^{n-1}}-
\integerchico{\alpha-1}{p^{n}}+\alpha-1-
\integerchico{\alpha-1}{p^{n-1}}\big)}-
q^{d\big(\alpha-1-\integerchico{\alpha-1}{p^{n-1}}\big)}\\
&=q^{d\big(\alpha-1-\integerchico{\alpha-1}{p^{n}}\big)}
-q^{d\big(\alpha-1-\integerchico{\alpha-1}{p^{n-1}}\big)}.
\end{align*}

From Lemma \ref{L4.2} (b) we obtain
\begin{align*}
\alpha-1-\integer{\alpha-1}{p^n}=\alpha-\mayor{\alpha}{p^n}
\quad\text{and}\quad \alpha-1-\integer{\alpha-1}{p^{n-1}}=
\alpha-\mayor{\alpha}{p^{n-1}}.\\
\intertext{Thus}
s_n(\alpha)=q^{\big(\alpha-\mayorchico{\alpha}{p^n}\big)}
-q^{\big(\alpha-\mayorchico{\alpha}{p^{n-1}}\big)}=
p^{n-1}(p-1)v_n(\alpha).
\end{align*}

Finally, the change of variable $\vec y\to \vec j \Witt \times \vec y$ with
$\vec j\in W_n({\ma F}_p)^{\ast}\cong \big({\ma Z}/p^n{\ma Z}
\big)^{\ast}$ gives the same field and we have
$\vec \beta\to \vec j\Witt \times \vec \beta$. Therefore
\[
t_n(\alpha)\leq \frac{s_n(\alpha)}{\varphi(p^n)}=\frac{s_n(\alpha)}
{p^n(p-1)}=v_n(\alpha).
\]
This proves (\ref{EqNew2}) and Theorem \ref{T2.1}.


\begin{thebibliography}{xx}

\bibitem{Has35} Hasse, Helmut, \textit{Theorie der
relativ--zyklischen algebraischen
Funktionen\-k\"orper, insbesondere bei endlichen
Konstantenk\"orper}, J. Reine Angew. Math. \textbf{172},
37--54, (1934).

\bibitem{Hay74} Hayes, David \textit{Explicit Class Field Theory for Rational Function
Fields}, Trans. Amer. Math. Soc. \textbf{189}, 77--91, (1974).

\bibitem{MaRzVi2013} Maldonado--Ram{\'\i}rez, Myriam, Rzedowski--Calder\'on
Martha and Villa--Salvador Gabriel, \textit{Genus fields of abelian extensions
of rational congruence function fields}, Finite Fields and Their Applications
{\bf 20}, 40--54 (2013).

\bibitem{SaRzVi2012} Salas--Torres, Julio Cesar,
Rzedowski--Calder\'on Martha and Villa--Salvador, Gabriel,
\textit{Tamely ramified extensions and cyclotomic fields in characteristic $p$},
Palestine Journal of Mathematics {\bf 2}, No. 1, 1--5 (2013).

\bibitem{SaRzVi2012-2} Salas--Torres, Julio Cesar,
Rzedowski--Calder\'on Martha and Villa--Salvador, Gabriel,
\textit{Artin--Schreier and Cyclotomic Extensions},
arXiv:1306.3716v2.

\bibitem{Sch36} Schmid, Hermann Ludwig, \textit{Zur Arithmetik
der zyklischen p-K\"orper}, J. Reine Angew. Math. 
\textbf{176}, 161--167 (1936).

\bibitem{Vil2006} Villa Salvador, Gabriel Daniel, \textit{Topics in the theory of
algebraic function fields}, Mathematics: Theory \& Applications. Birkh\"auser Boston,
Inc., Boston, MA, 2006.

\end{thebibliography}
\end{document}